\documentclass[12pt]{amsart}
%%%%%%%%%%%%%%%%%%%%%%%%%%%%%%%%%%%%%%%%%%%%%%%%%%%%%%%%%
\usepackage{amssymb}
\usepackage{amsthm}
\usepackage{enumerate}
\usepackage{graphicx}
\usepackage[all,cmtip]{xy}
 \usepackage{tikz-cd}
%%%%%%%%%%%%%%%%%%%%%%%%%%%%%%%%%%%%%%%%%%%%%%%%%%%%%%%%%
\makeatletter
\@namedef{subjclassname@2010}{%
  \textup{2010} Mathematics Subject Classification}
\makeatother
%%%%%%%%%%%%%%%%%%%%%%%%%%%%%%%%%%%%%%%%%%%%%%%%%%%%%%%%%
\newtheorem{thm}{Theorem}[section]
\newtheorem{corollary}[thm]{Corollary}
\newtheorem{lemma}[thm]{Lemma}
\newtheorem{proposition}[thm]{Proposition}
%%%%%%%%%%%%%%%%%%%%%%%%%%%%%%%%%%%%%%%%%%%%%%%%%%%%%%%%%
\theoremstyle{definition}
\newtheorem{definition}[thm]{Definition}
\newtheorem{remark}[thm]{Remark}
\newtheorem{example}[thm]{Example}
\newtheorem{question}[thm]{Question}
%%%%%%%%%%%%%%%%%%%%%%%%%%%%%%%%%%%%%%%%%%%%%%%%%%%%%%%%%
\frenchspacing
\textwidth=13.5cm
\textheight=23cm
\parindent=16pt
\topmargin=-0.5cm
%%%%%%%%%%%%%%%%%%%%%%%%%%%%%%%%%%%%%%%%%%%%%%%%%%%%%%%%%

%%%%%%%%%%%%%%%%%%%%%%%%%%%%%%%%%%%%%%%%%%%%%%%%%%%%%%%%%
%\renewcommand{\labelenumi}{{\rm (\theenumi)}}
%%%%%%%%%%%%%%%%%%%%%%%%%%%%%%%%%%%%%%%%%%%%%%%%%%%%%%%%%

%%%%%%%%%%%%%%%%%%%%%%%%%%%%%%%%%%%%%%%%%%%%%%%%%%%%%%%%%%%%%%%%%%%%%%%%%%%%%%%%%%%%%%%%%%%%%%%%%%%%%%%%%%%%%%%%%%%%%%%%%%%%%%%%%%%%%%%%%%%%%%
%%%%%%%%%%%%%%%%%%%%%%%%%%%%%%%%%%%%%%%%%%%%%%%%%%%%%%%%%%%%%%%%%%%%%%%%%%%%%%%%%%%%%%%%%%%%%%%%%%%%%%%%%%%%%%%%%%%%%%%%%%%%%%%%%%%%%%%%%%%%%%
%%%%%%%%%%%%%%%%%%%%%%%%%%%%%%%%%%%%%%%%%%%%%%%%%%%%%%%%%%%%%%%%%%%%%%%%%%%%%%%%%%%%%%%%%%%%%%%%%%%%%%%%%%%%%%%%%%%%%%%%%%%%%%%%%%%%%%%%%%%%%%
%%%%%%%%%%%%%%%%%%%%%%%%%%%%%%%%%%%%%%%%%%%%%%%%%%%%%%%%%%%%%%%%%%%%%%%%%%%%%%%%%%%%%%%%%%%%%%%%%%%%%%%%%%%%%%%%%%%%%%%%%%%%%%%%%%%%%%%%%%%%%%
\begin{document}
\baselineskip=15pt
\title{Uniformly $S$-pseudo-injective modules}

\author[M. Adarbeh]{Mohammad Adarbeh $^{(\star)}$}
\address{Department of Mathematics, Birzeit University, Birzeit,  Palestine}
\email{madarbeh@birzeit.edu}
\author[M. Saleh]{Mohammad Saleh }
\address{Department of Mathematics, Birzeit University, Birzeit,  Palestine}
\email{msaleh@birzeit.edu}

\thanks{$^{(\star)}$ Corresponding author}
\date{}

\begin{abstract}
 This paper introduces the notion of uniformly $S$-pseudo-injective ($u$-$S$-pseudo-injective) modules as a generalization of $u$-$S$-injective modules. Let $R$ be a ring and $S$ a multiplicative subset of $R$. An $R$-module $E$ is said to be $u$-$S$-pseudo-injective if for any submodule $K$ of $E$, there is $s\in S$ such that for any $u$-$S$-monomorphism $f:K\to E$, $sf$ can be extended to an endomorphism $g:E\to E$. Several properties of this notion are studied. For example, we show that an $R$-module $M$ is $u$-$S$-quasi-injective if and only if $M\oplus M$ is $u$-$S$-pseudo-injective. Two classes of rings related to the class of $QI$-rings are introduced and characterized.
\end{abstract}

\subjclass[2010]{13Cxx, 13C11, 13C12, 16D60.}

\keywords{$u$-$S$-injective, $u$-$S$-quasi-injective, $u$-$S$-pseudo-injective}

\maketitle

\section{Introduction}
Throughout this paper, all rings are commutative with a nonzero identity, and all modules are unitary. Recall that a subset $S$ of a ring $R$ is called a multiplicative subset of $R$ if $1 \in S$, $0 \notin S$, and $s_1s_2\in S$ for all $s_1,s_2 \in S$. Throughout, $R$ denotes a commutative ring with identity and $S$ a multiplicative subset of $R$. Let $M$, $N$, and $L$ be $R$-modules.
    \begin{enumerate}
    \item[(i)] $M$ is called a $u$-$S$-torsion module if there exists $s \in S$ such that $sM = 0$ \cite{Z}.
    \item[(ii)] An $R$-homomorphism $f: M \to N$ is called a $u$-$S$-monomorphism ($u$-$S$-epimorphism) if $\text{Ker}(f)$ ($\text{Coker}(f)$) is a $u$-$S$-torsion module \cite{Z}.
    \item[(iii)] An $R$-homomorphism $f: M \to N$ is called a $u$-$S$-isomorphism if $f$ is both a $u$-$S$-monomorphism and a $u$-$S$-epimorphism \cite{Z}.
\item[(iv)]  An $R$-sequence $M \xrightarrow{f} N \xrightarrow{g} L$ is said to be $u$-$S$-exact if there exists $s \in S$ such that $s\text{Ker}(g) \subseteq \text{Im}(f)$ and $s\text{Im}(f) \subseteq \text{Ker}(g)$. A $u$-$S$-exact sequence $0 \to M \to N \to L \to 0$ is called a short $u$-$S$-exact sequence \cite{ZQ}. 
\item[(v)] A short $u$-$S$-exact sequence $0\to M \xrightarrow{f} N \xrightarrow{g} L\to 0$ is said to be $u$-$S$-split (with respect to $s$) if there is $s\in S$ and an $R$-homomorphism $f':N \to M$ such that $f'f=s1_{M}$, where $1_{M}:M\to M$ is the identity map on $M$ \cite{ZQ}.
 \end{enumerate}
 The notion of $u$-$S$-injective modules was introduced and studied by W. Qi et al. in \cite{QK}. They defined an $R$-module $E$ to be $u$-$S$-injective if the induced sequence
 \begin{center}
     $0 \to \text{Hom}_{R}(C,E)\to \text{Hom}_R(B,E)\to \text{Hom}_R(A,E) \to 0$
 \end{center}
 is $u$-$S$-exact for any $u$-$S$-exact sequence $0 \to
A\to B\to C \to 0$. Equivalently, if the induced sequence $0 \to \text{Hom}_{R}(C,E)\to \text{Hom}_R(B,E)\to \text{Hom}_R(A,E) \to 0$ is $u$-$S$-exact for any short exact sequence $0 \to
A\to B\to C \to 0$ \cite[Theorem 4.3]{QK}. Injective modules and $u$-$S$-torsion modules are $u$-$S$-injective \cite{QK}. X. L. Zhang and W. Qi \cite{ZQ} introduced the notions of $u$-$S$-semisimple modules and $u$-$S$-semisimple rings. An $R$-module $M$ is called $u$-$S$-semisimple if any short $u$-$S$-exact sequence $0\to A\to M\to C\to 0$ is $u$-$S$-split. A ring $R$ is called $u$-$S$-semisimple if any free $R$-module is $u$-$S$-semisimple. Recently, M. Adarbeh and M. Saleh \cite{MM} introduced and studied the notion of $u$-$S$-injective relative to a module. They defined an $R$-module $E$ to be $u$-$S$-injective relative to a module $M$ if for any $u$-$S$-monomorphism $f:K\to M$, the induced map $\text{Hom}_{R}(f,E): \text{Hom}_R(M,E)\to \text{Hom}_R(K,E)$ is a $u$-$S$-epimorphism. They also introduced the notion of $u$-$S$-quasi-injective modules. An $R$-module $E$ is called $u$-$S$-quasi-injective if it is $u$-$S$-injective relative to $E$. By \cite[Theorem 2.4]{MM}, we conclude that an $R$-module $E$ is $u$-$S$-quasi-injective if and only if for any submodule $K$ of $E$, there is $s\in S$ such that for any $R$-homomorphism $f:K\to E$, $sf$ can be extended to an endomorphism $g:E\to E$. In this paper, we define $u$-$S$-pseudo-injective modules as follows: an $R$-module $E$ is said to be $u$-$S$-pseudo-injective if for any submodule $K$ of $E$, there is $s\in S$ such that for any $u$-$S$-monomorphism $f:K\to E$, $sf$ can be extended to an endomorphism $g:E\to E$. We have 
\begin{center}
$u$-$S$-injective $\Rightarrow$  $u$-$S$-quasi-injective $\Rightarrow$  $u$-$S$-pseudo-injective.
\end{center}

In Section 2, we discuss some properties of $u$-$S$-pseudo-injective modules. For example, we show in Remark \ref{rem1} that if $S\subseteq U(R)$, where $U(R)$ denotes the set of all units of $R$, then the notions of $u$-$S$-pseudo-injective modules and pseudo-injective modules coincide. However, they are different in general (see Example \ref{ex2}). Theorem \ref{thm2} and Corollary \ref{cor1} give the uniformly $S$-version of \cite[Theorem 1]{JS} and its corollary, respectively, in the commutative case. Theorem \ref{thm4} gives a new characterization of $u$-$S$-semisimple rings in terms of $u$-$S$-pseudo-injective modules. 

In Section 3, firstly, we introduce two classes of rings related to the class of $QI$-rings (rings in which every quasi-injective module is injective). Let $S$ be a multiplicative subset of a ring $R$. $R$ is called a $Q$$u$-$S$-$I$-ring ($u$-$S$-$Q$$u$-$S$-$I$-ring) if every quasi-injective $R$-module is $u$-$S$-injective (every $u$-$S$-quasi-injective $R$-module is $u$-$S$-injective). By \cite[the Corollary after Proposition 1]{B} and since every $QI$-ring is an $SSI$-ring (a ring in which every semisimple module is injective), we have every commutative $QI$-ring is semisimple. By Remark \ref{rem2}, we have 
\begin{center}
$QI$-rings $\Rightarrow$ $u$-$S$-semisimple rings $\Rightarrow$ $u$-$S$-$Q$$u$-$S$-$I$-rings $\Rightarrow$ $Q$$u$-$S$-$I$-rings.
\end{center}
 We characterize $Q$$u$-$S$-$I$-rings ($u$-$S$-$Q$$u$-$S$-$I$-rings) in Theorem \ref{thm1} (Theorem \ref{thrm1}). In Proposition \ref{plocal}, we give a local characterization of $QI$-rings. The last result (Theorem \ref{thm3}) of this section gives a characterization of rings in which every $u$-$S$-pseudo-injective module is $u$-$S$-injective. Throughout, $U(R)$ denotes the set of all units of $R$; $\text{Max}(R)$ denotes the set of all maximal ideals of $R$; $\text{Spec}(R)$ denotes the set of all prime ideals of $R$.
%%%%%%%%%%%%%%%%%%%%%%%%%%%%%%%%%%%%%%%%%%%%%%%%%%%%%%%%%%%%%%%%%%%%%%%%%%%%%%%%%%%%%%%%%%%%%%%%%%%%%%%%%%%%
\section{$u$-$S$-pseudo-injective modules}\label{d}

We start this section by recalling the following definition from \cite{MM}:

\begin{definition}\label{def1}
   Let $S$ be a multiplicative subset of a ring $R$ and $E,M$ be $R$-modules.
\begin{enumerate}
\item[(i)]  $E$ is said to be $u$-$S$-injective relative to $M$ if for any $u$-$S$-monomorphism $f:K\to M$, the map 
 $$\text{Hom}_{R}(f,E): \text{Hom}_R(M,E)\to \text{Hom}_R(K,E)$$ is a $u$-$S$-epimorphism.
 \item[(ii)] $E$ is said to be $u$-$S$-quasi-injective if it is $u$-$S$-injective relative to $E$. 
\end{enumerate}   
\end{definition}

\begin{lemma}\label{lem1}
  Let $S$ be a multiplicative subset of a ring $R$ and $E$ an $R$-module. Then the following are equivalent:
\begin{enumerate}
    \item[(1)] $E$ is $u$-$S$-quasi-injective; 
    \item[(2)] for any monomorphism $h:K\to E$, there is $s\in S$ such that for any $R$-homomorphism $f:K\to E$, there is $g\in \text{End}_R(E)$ such that $sf=gh$;
    \item[(3)] for any submodule $K$ of $E$, there is $s\in S$ such that for any $R$-homomorphism $f:K\to E$, $sf$ can be extended to $g\in \text{End}_R(E)$.
\end{enumerate}
\end{lemma}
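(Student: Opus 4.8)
The plan is to prove the cycle $(1)\Rightarrow(2)\Rightarrow(3)\Rightarrow(1)$, since the first two implications are essentially bookkeeping and the real content sits in $(3)\Rightarrow(1)$. For $(1)\Rightarrow(2)$, I would note that a genuine monomorphism $h:K\to E$ has trivial kernel, hence is in particular a $u$-$S$-monomorphism. By Definition \ref{def1} the induced map $\text{Hom}_R(h,E):\text{End}_R(E)\to\text{Hom}_R(K,E)$ is then a $u$-$S$-epimorphism, so its cokernel is $u$-$S$-torsion; choosing $s\in S$ annihilating this cokernel gives, for every $f\in\text{Hom}_R(K,E)$, that $sf\in\text{Im}(\text{Hom}_R(h,E))$, i.e. $sf=gh$ for some $g\in\text{End}_R(E)$, which is exactly $(2)$. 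The implication $(2)\Rightarrow(3)$ is then just the special case where $h$ is the inclusion $K\hookrightarrow E$ of a submodule, since $gh=sf$ says precisely that $g$ extends $sf$.

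The substance is $(3)\Rightarrow(1)$, where I must upgrade the extension property from genuine submodules to arbitrary $u$-$S$-monomorphisms. Assuming $(3)$, let $\alpha:K\to E$ be any $u$-$S$-monomorphism; set $N=\text{Ker}(\alpha)$ and $L=\text{Im}(\alpha)\subseteq E$, and factor $\alpha=\iota\alpha'$ with $\alpha':K\twoheadrightarrow L$ the corestriction and $\iota:L\hookrightarrow E$ the inclusion. Since $\alpha$ is a $u$-$S$-monomorphism there is $t\in S$ with $tN=0$, and applying $(3)$ to the submodule $L$ furnishes an $s\in S$, depending only on $L$ and not on any map into $E$, such that every $\beta:L\to E$ admits $g\in\text{End}_R(E)$ with $g\iota=s\beta$.

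Given an arbitrary $\phi\in\text{Hom}_R(K,E)$, the key step is to descend $t\phi$ through $\alpha$. Because $tN=0$, the map $t\phi$ vanishes on $N=\text{Ker}(\alpha)$, so the rule $\beta(\alpha(k))=t\phi(k)$ defines a well-defined $R$-homomorphism $\beta:L\to E$ satisfying $\beta\alpha'=t\phi$. Feeding this $\beta$ into $(3)$ yields $g\in\text{End}_R(E)$ with $g\iota=s\beta$, whence
$$g\alpha=g\iota\alpha'=s\beta\alpha'=st\,\phi.$$
Thus $st\,\phi=\text{Hom}_R(\alpha,E)(g)$ lies in the image of $\text{Hom}_R(\alpha,E)$ for every $\phi$, with the single element $st\in S$ independent of $\phi$. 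Consequently $st$ annihilates $\text{Coker}(\text{Hom}_R(\alpha,E))$, this cokernel is $u$-$S$-torsion, and $\text{Hom}_R(\alpha,E)$ is a $u$-$S$-epimorphism; as $\alpha$ was an arbitrary $u$-$S$-monomorphism, $E$ is $u$-$S$-injective relative to $E$, i.e. $u$-$S$-quasi-injective.

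The main obstacle I anticipate is precisely the bookkeeping of the multiplicative elements in $(3)\Rightarrow(1)$: one must guarantee that a single $s\in S$ serves all $\phi$ simultaneously, which holds because $(3)$ extracts $s$ from the submodule $L$ alone, and that $t$ depends only on $\alpha$ and not on $\phi$, which follows from $u$-$S$-torsionness of $\text{Ker}(\alpha)$. The product $st$ then functions as the required uniform annihilator of the cokernel, and the only point needing genuine care is the verification that $\beta$ is well-defined, which is exactly where the hypothesis $tN=0$ enters.
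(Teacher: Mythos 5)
Your proposal is correct, but it is genuinely more self-contained than what the paper does: the paper's entire proof of Lemma \ref{lem1} is a citation of \cite[Theorem 2.4]{MM}, deferring all content to that reference, whereas you prove the cycle $(1)\Rightarrow(2)\Rightarrow(3)\Rightarrow(1)$ from Definition \ref{def1} directly. Your bookkeeping in $(1)\Rightarrow(2)$ is exactly right: the uniformity of $s$ over all $f$ comes from the fact that a $u$-$S$-epimorphism has a \emph{single} $s\in S$ killing the whole cokernel module, and a genuine monomorphism is trivially a $u$-$S$-monomorphism since its kernel is zero. The real content, as you say, is $(3)\Rightarrow(1)$, and your factorization $\alpha=\iota\alpha'$ through $L=\text{Im}(\alpha)$ works: for $\phi\in\text{Hom}_R(K,E)$ and $t\in S$ with $t\,\text{Ker}(\alpha)=0$, the map $t\phi$ kills $\text{Ker}(\alpha)$ (since $t\phi(n)=\phi(tn)=0$), so $\beta(\alpha(k))=t\phi(k)$ is well-defined on $L$, and then $(3)$ applied to the submodule $L$ gives a single $s\in S$ with $g\iota=s\beta$ for each such $\beta$, yielding $g\alpha=st\,\phi$ with $st$ depending only on $\alpha$ --- precisely the uniform annihilation of $\text{Coker}(\text{Hom}_R(\alpha,E))$ required. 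What your route buys is independence from the external preprint \cite{MM}; what the paper's route buys is brevity, since the cited theorem presumably contains essentially this same descent-through-the-image argument. One stylistic note: in $(3)\Rightarrow(1)$ you should make explicit that $\beta$ is $R$-linear (immediate, since $\beta(r\alpha(k))=\beta(\alpha(rk))=rt\phi(k)$), but this is routine and does not affect correctness.
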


\begin{proof}
    This follows from \cite[Theorem 2.4]{MM}.
\end{proof}

Let $R$ be a ring. Recall that an $R$-module $E$ is called pseudo-injective if for any submodule $K$ of $E$, every monomorphism $f:K\to E$ can be extended to an endomorphism $g:E\to E$ \cite{JS}. Now, we introduce the uniformly $S$-version of pseudo-injective modules. 

\begin{definition}\label{def2}
   Let $S$ be a multiplicative subset of a ring $R$. An $R$-module $E$ is said to be $u$-$S$-pseudo-injective if for any submodule $K$ of $E$, there is $s\in S$ such that for any $u$-$S$-monomorphism $f:K\to E$, $sf$ can be extended to an endomorphism $g:E\to E$.
   \end{definition}

\begin{remark} \label{rem1}
      Let $S$ be a multiplicative subset of a ring $R$. 
      \begin{enumerate}
          \item[(1)] If $S\subseteq U(R)$, the notions of $u$-$S$-pseudo-injective modules and pseudo-injective modules coincide.
          \item[(2)] $u$-$S$-injective $\Rightarrow$  $u$-$S$-quasi-injective $\Rightarrow$  $u$-$S$-pseudo-injective. 
          \item[(3)] By (2) and \cite[ Proposition 3.6]{MM}, every $u$-$S$-semisimple module is $u$-$S$-pseudo-injective. 
      \end{enumerate}
      \begin{center}
    \end{center}
\end{remark}

For an $R$-module $M$, let $K\leq M$ denote that $K$ is a submodule of $M$. The following proposition provides some properties of $u$-$S$-pseudo-injective modules.

\begin{proposition}\label{prop1}
   Let $S$ be a multiplicative subset of a ring $R$. 
   \begin{enumerate}
   \item[(1)] Let $0\to A\xrightarrow{f} B\xrightarrow{g} C\to 0 $ be a $u$-$S$-split $u$-$S$-exact sequence. If $B$ is a $u$-$S$-pseudo-injective module, then so are $A$ and $C$.
    \item[(2)] If $A\oplus B$ is a $u$-$S$-pseudo-injective module, then so are $A$ and $B$. 
  \item[(3)] Let $f:A\to B$ be a $u$-$S$-isomorphism. Then $A$ is $u$-$S$-pseudo-injective if and only if $B$ is $u$-$S$-pseudo-injective. 
  \item[(4)] If $A$ is a $u$-$S$-pseudo-injective module, then any $u$-$S$-monomorphism $f:A\to A$ $u$-$S$-splits.
\end{enumerate}  
\end{proposition}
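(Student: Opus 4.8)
The backbone of every part is a single device: a u-$S$-monomorphism, and more generally a u-$S$-isomorphism, has a ``u-$S$-inverse''. Precisely, if $h\colon K\to E$ is a u-$S$-mono and $t\in S$ kills $\operatorname{Ker}(h)$, then $h(x)\mapsto tx$ is a well-defined $R$-homomorphism $\theta\colon h(K)\to K$ with $\theta h=t\,1_{K}$, and $\theta$ is itself a u-$S$-mono; if $\theta\colon X\to Y$ is a u-$S$-iso with $t\operatorname{Ker}(\theta)=0$ and $tY\subseteq\theta(X)$, then $y\mapsto tx$ (for $\theta(x)=ty$) defines $\psi\colon Y\to X$ with $\psi\theta=t^{2}1_{X}$ and $\theta\psi=t^{2}1_{Y}$, again a u-$S$-iso. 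Using $\theta$, the definition of u-$S$-pseudo-injectivity upgrades to the reformulation (GP), the pseudo-analogue of Lemma \ref{lem1}(2): if $E$ is u-$S$-pseudo-injective, then for every u-$S$-mono $h\colon K\to E$ there is $s\in S$ such that every u-$S$-mono $\psi\colon K\to E$ satisfies $gh=s\psi$ for some $g\in\operatorname{End}_{R}(E)$. Indeed one applies the definition to the submodule $h(K)\le E$ and to the u-$S$-mono $\psi\theta$, then composes with $\theta$. This is what lets the various maps below, which are only u-$S$-monos, be fed into the extension property.

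I would prove (4) and (2) first, as they are most self-contained. For (4), let $f\colon A\to A$ be a u-$S$-mono with $t\operatorname{Ker}(f)=0$. Applying u-$S$-pseudo-injectivity to the submodule $f(A)\le A$ and to the u-$S$-mono $\theta\colon f(A)\to A$, $f(x)\mapsto tx$, yields $s\in S$ and $g\in\operatorname{End}_{R}(A)$ with $g|_{f(A)}=s\theta$; then $gf=st\,1_{A}$, so $f$ u-$S$-splits with respect to $st$. For (2) I exploit that the direct-sum inclusions and projections are genuine, with $\pi_{A}\iota_{A}=1_{A}$. Given $K\le A$ and a u-$S$-mono $\phi\colon K\to A$, regard $K$ as a submodule of $A\oplus B$ and apply the definition to the u-$S$-mono $\iota_{A}\phi\colon K\to A\oplus B$; the resulting $G\in\operatorname{End}_{R}(A\oplus B)$ with $G|_{K}=s\,\iota_{A}\phi$ gives $g:=\pi_{A}G\iota_{A}$ satisfying $g|_{K}=s\phi$. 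Hence $A$, and by symmetry $B$, is u-$S$-pseudo-injective; the scalar $s$ depends only on $K$ because the definition fixes $s$ before quantifying over the lifted maps.

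For (3) it suffices, by the u-$S$-inverse, to prove one implication. Let $\theta\colon A\to B$ be a u-$S$-iso with u-$S$-inverse $\psi\colon B\to A$, $\theta\psi=t^{2}1_{B}$, and assume $A$ is u-$S$-pseudo-injective. Given $L\le B$ and a u-$S$-mono $\chi\colon L\to B$, apply (GP) to $A$ with the u-$S$-mono $h=\psi\iota_{L}\colon L\to A$ and the u-$S$-mono $\alpha=\psi\chi\colon L\to A$; the output $g\in\operatorname{End}_{R}(A)$ with $gh=s\alpha$ produces $G:=\theta g\psi\in\operatorname{End}_{R}(B)$ with $G|_{L}=st^{2}\chi$. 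Thus $B$ is u-$S$-pseudo-injective, and the reverse implication is the same argument with the roles of $\theta,\psi$ exchanged. Finally, for (1) the cleanest route is to note that a u-$S$-split u-$S$-exact sequence makes $(f',g)\colon B\to A\oplus C$ a u-$S$-isomorphism, where $f'f=s_{0}1_{A}$; then $B$ u-$S$-pseudo-injective gives, via (3), that $A\oplus C$ is u-$S$-pseudo-injective, and (2) yields the conclusion for both $A$ and $C$.

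The main obstacle is this last assertion, that $(f',g)$ is a u-$S$-isomorphism. That it is a u-$S$-mono is routine: $f'(b)=0=g(b)$ forces, using $s_{2}\operatorname{Ker}(g)\subseteq\operatorname{Im}(f)$ and $f'f=s_{0}1_{A}$, that $s_{0}s_{2}b=0$. The delicate part is u-$S$-epi-ness, namely producing for each $(a,c)\in A\oplus C$ an element of $B$ whose image is a common $S$-multiple of $(a,c)$; this is where the full u-$S$-exactness at $B$ (both $s_{2}\operatorname{Ker}(g)\subseteq\operatorname{Im}(f)$ and $s_{2}\operatorname{Im}(f)\subseteq\operatorname{Ker}(g)$) together with the u-$S$-epi-ness of $g$ must be combined: the $A$-coordinate is reached through $f(a)$, the $C$-coordinate through a $b$ with $g(b)$ an $S$-multiple of $c$, and the cross terms are cleared by multiplying by the relevant elements of $S$. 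Alternatively, (1) may be proved directly: the $A$-part needs only the retraction $f'$ via (GP), while the $C$-part uses the u-$S$-section $g'=\beta\iota_{C}$ of $g$ coming from the u-$S$-inverse $\beta$ of $(f',g)$, after which the argument mirrors (2).
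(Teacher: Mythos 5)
Your proposal is correct, and the uniformity bookkeeping --- the one place such arguments usually break, since Definition \ref{def2} fixes $s$ \emph{before} quantifying over the u-$S$-monomorphisms from a given submodule --- is handled properly throughout (in (GP) the final scalar $s't$ depends only on $h$; in (2) only on $K\oplus 0$; in (3) only on $L$ and the fixed $t$). But your logical organization is essentially the reverse of the paper's. The paper proves (1) directly: u-$S$-splitness supplies a retraction $f'$ with $f'f=t1_A$ (and a section $g'$ with $gg'=t1_C$, which is not literally in the stated definition of u-$S$-split and is imported from the literature); given $K\leq A$ and a u-$S$-mono $h'\colon K\to A$, it feeds the composite $fh'f'|_{f(K)}\colon f(K)\to B$ into the pseudo-injectivity of $B$ and sets $g'=f'gf$, $s'=st^2$ by explicit scalar computation. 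It then obtains (2) as the special case of the split sequence $0\to A\to A\oplus B\to B\to 0$ and (3) from (1) applied to the degenerate u-$S$-split sequences $0\to 0\to A\to B\to 0$ and $0\to A\to B\to 0\to 0$; its (4) is your (4), with your hand-built $\theta$ replaced by a citation of \cite[Lemma 2.1]{ZQ}. You instead prove (2) and (4) directly, isolate the reformulation (GP) (a pseudo-injective analogue of Lemma \ref{lem1}(2) that the paper never states but uses implicitly), prove (3) from (GP) via a two-sided u-$S$-inverse, and deduce (1) from (2)+(3) by showing $(f',g)\colon B\to A\oplus C$ is a u-$S$-isomorphism --- your cross-term computation $(f',g)\bigl(s_0s_2b-s_2f(f'(b))\bigr)=(0,s_0s_1s_2\,g(b))$ does close the u-$S$-epi gap, but note this last step is precisely \cite[Lemma 2.8]{KMOZ}, which the paper itself invokes in Theorem \ref{thm2}, so you could simply have cited it. What your route buys is that it never needs a section of $g$ and makes transfer along u-$S$-isomorphisms mechanical via (GP); what the paper's route buys is that a single direct proof of (1) delivers (2) and (3) for free.
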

\begin{proof}  (1) Since $0\to A\xrightarrow{f} B\xrightarrow{g} C\to 0 $ $u$-$S$-splits, there are $R$-homomorphisms $f':B\to A$ and $g':C\to B$ such that $f'f=t1_A$ and $gg'=t1_C$ for some $t\in S$. Suppose that $B$ is a $u$-$S$-pseudo-injective module. Let $K\leq A$. Then $f(K)\leq B$. Since $B$ is $u$-$S$-pseudo-injective, then there is $s\in S$ such that for any $u$-$S$-monomorphism $h:f(K)\to B$, there is $e\in \text{End}_R(B)$ such that $sh=e\mid_{f(K)}$. Let $h':K\to A$ be any $u$-$S$-monomorphism. Since $f'(f(K))=tK\subseteq K$ and $f,h'$ are $u$-$S$-monomorphisms, we have $h:=fh'(f'|_{f(K)}):f(K)\to B$ is a $u$-$S$-monomorphism. So $sh=e\mid_{f(K)}$ for some $e\in \text{End}_R(B)$. Let $s'=st^2$ and $e'=f'ef$. Then $e'\in \text{End}_R(A)$ and for $k\in K$, we have 
       $$e'(k)=f'ef(k)=f'sh(f(k))=sf'fh'f'(f(k))=st^2h'(k)=s'h'(k).$$  
       Hence $s'h'=e'\mid_{K}$. Thus $A$ is a $u$-$S$-pseudo-injective module. Similarly, we can show that $C$ is a $u$-$S$-pseudo-injective module.\\[0.1cm]
      (2) Let $i_A:A \to A\oplus B$ be the natural injection and $p_B: A\oplus B\to B$ be the natural projection. Since $0\to A\xrightarrow{i_A} A\oplus B\xrightarrow{p_B} B\to 0$ is a split exact sequence (hence a $u$-$S$-split $u$-$S$-exact sequence), then this part follows from part (1).  \\[0.1cm]
(3) This follows from part (1) and the fact that the $u$-$S$-exact sequences $0\to 0\to A\xrightarrow{f} B\to 0$ and $0\to A\xrightarrow{f} B\to 0\to 0$ are $u$-$S$-split.\\[0.1cm]
(4) Suppose that $A$ is a $u$-$S$-pseudo-injective module. Let $f:A\to A$ be any $u$-$S$-monomorphism. Then $f:A\to \text{Im}(f)$ is a $u$-$S$-isomorphism. Then by \cite[Lemma 2.1]{ZQ}, there is a $u$-$S$-isomorphism $f':\text{Im}(f)\to A$ and $t\in S$ such that $f'f=t1_A$. Since $\text{Im}(f)\leq A$ and $A$ is $u$-$S$-pseudo-injective, then there is an $R$-endomorphism $g:A\to A$ such that $sf'=g|_{\text{Im}(f)}$ for some $s\in S$. For any $a\in A$, $sta=sf'f(a)=g(f(a))$. Hence $f$ $u$-$S$-splits. \end{proof}

\begin{thm}\label{thm2}
   Let $S$ be a multiplicative subset of a ring $R$. If $A\oplus B$ is a $u$-$S$-pseudo-injective module and $\varphi:A\to B$ is a $u$-$S$-monomorphism, then $\varphi$ $u$-$S$-splits and $A$ is $u$-$S$-quasi-injective.
\end{thm}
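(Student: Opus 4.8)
The plan is to exploit the $u$-$S$-pseudo-injectivity of $M:=A\oplus B$ twice, each time by feeding it a carefully chosen $u$-$S$-monomorphism out of a submodule of $M$ and reading off the desired map from the components of the resulting endomorphism. Throughout I write an endomorphism $g\in\text{End}_R(M)$ in matrix form $g=\left(\begin{smallmatrix} g_{11}&g_{12}\\ g_{21}&g_{22}\end{smallmatrix}\right)$ with $g_{11}\in\text{End}_R(A)$, $g_{12}:B\to A$, $g_{21}:A\to B$, $g_{22}\in\text{End}_R(B)$, and I identify $A$ and $B$ with their canonical images $A\oplus 0$ and $0\oplus B$ in $M$.

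For the splitting I would first replace $\varphi$ by an honest partial inverse. Since $\varphi:A\to\text{Im}(\varphi)$ is a $u$-$S$-isomorphism, \cite[Lemma 2.1]{ZQ} (used already in Proposition~\ref{prop1}(4)) gives a $u$-$S$-isomorphism $\varphi':\text{Im}(\varphi)\to A$ and $t\in S$ with $\varphi'\varphi=t1_A$. Then I define $h:\text{Im}(\varphi)\to M$ by $h(y)=(\varphi'(y),0)$ on the submodule $0\oplus\text{Im}(\varphi)\le M$; its kernel is $\ker\varphi'$, which is $u$-$S$-torsion, so $h$ is a $u$-$S$-monomorphism. Pseudo-injectivity of $M$ yields $s\in S$ and $g\in\text{End}_R(M)$ with $g|_{\text{Im}(\varphi)}=sh$, whence $g_{12}(y)=s\varphi'(y)$ for $y\in\text{Im}(\varphi)$. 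Taking $\varphi'':=g_{12}:B\to A$, for any $a\in A$ we get $\varphi''(\varphi(a))=s\varphi'(\varphi(a))=st\,a$, i.e. $\varphi''\varphi=(st)1_A$ with $st\in S$; this is exactly the condition that $\varphi$ $u$-$S$-splits.

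For the $u$-$S$-quasi-injective conclusion I would use Lemma~\ref{lem1}(3): fix a submodule $K\le A$ and an arbitrary $R$-homomorphism $f:K\to A$, and produce one $s\in S$, depending only on $K$, extending $sf$ to an endomorphism of $A$. The key step is to turn the possibly-non-injective map $f$ into a $u$-$S$-monomorphism into $M$ by pairing it with $\varphi$: define $\psi_f:K\to M$ by $\psi_f(k)=(f(k),\varphi(k))$. Since $\ker\psi_f\subseteq\ker(\varphi|_K)\subseteq\ker\varphi$ and $\varphi$ is a $u$-$S$-monomorphism, $\ker\psi_f$ is $u$-$S$-torsion, so $\psi_f$ is a $u$-$S$-monomorphism for every choice of $f$. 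Applying Definition~\ref{def2} to the submodule $K\le M$ yields a single $s\in S$ (independent of $f$) and $g\in\text{End}_R(M)$ with $g|_K=s\psi_f$; comparing first coordinates gives $g_{11}(k)=sf(k)$ for all $k\in K$, so $g_{11}\in\text{End}_R(A)$ extends $sf$. As $s$ depends only on $K$, Lemma~\ref{lem1}(3) shows $A$ is $u$-$S$-quasi-injective.

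The main obstacle — and the real content of the theorem — is the gap between pseudo-injectivity (which only extends $u$-$S$-\emph{monomorphisms}) and quasi-injectivity (which must extend \emph{all} homomorphisms). The device that bridges it is the pairing $k\mapsto(f(k),\varphi(k))$: the second summand $\varphi$ is injective up to $S$-torsion and so forces $\psi_f$ to be a $u$-$S$-monomorphism regardless of $f$, while the first summand carries exactly the information needed to extend $f$. The only points requiring care are the uniformity of $s$ (it must come from the submodule $K$ and not from $f$, which Definition~\ref{def2} indeed guarantees) and the routine verifications that the various kernels are $u$-$S$-torsion.
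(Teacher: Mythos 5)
Your proof is correct, and its second half takes a genuinely different route from the paper's. For the splitting, your argument parallels the paper's: both extend a ``partial inverse of $\varphi$'' defined on $0\oplus\varphi(A)$ and read off the component $B\to A$ of the resulting endomorphism. But the paper works with the map $f(0,\varphi(a))=(a,0)$, which is well defined only when $\varphi$ is honestly injective, whereas the hypothesis only makes $\varphi$ a $u$-$S$-monomorphism; your substitution of the genuine homomorphism $\varphi'$ from \cite[Lemma 2.1]{ZQ} with $\varphi'\varphi=t1_A$ (at the harmless cost of the extra factor $t\in S$) repairs this well-definedness issue, so on this point your write-up is actually more careful than the published one. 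For the quasi-injectivity of $A$, the paper proceeds indirectly: it uses the splitting together with \cite[Lemma 2.8]{KMOZ} and \cite[Proposition 2.4]{KMOZ} to show $A\oplus B$ is $u$-$S$-isomorphic to $A\oplus A\oplus C$, deduces from Proposition~\ref{prop1} (2) and (3) that $A\oplus A$ is $u$-$S$-pseudo-injective, and only then applies the pairing trick $k\mapsto(k,h(k))$, with the identity in the first coordinate guaranteeing injectivity. You instead pair directly with $\varphi$ inside $A\oplus B$: since $\ker\varphi$ is $u$-$S$-torsion, $\psi_f(k)=(f(k),\varphi(k))$ is a $u$-$S$-monomorphism for \emph{every} homomorphism $f:K\to A$, and the uniformity of $s$ in Definition~\ref{def2} over the fixed submodule $K\oplus 0$ delivers exactly condition (3) of Lemma~\ref{lem1}. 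This bypasses the KMOZ machinery and the permanence properties of Proposition~\ref{prop1} entirely, and it decouples the two conclusions of the theorem (your quasi-injectivity argument never invokes the splitting). What the paper's longer route buys is the reusable intermediate fact that $A\oplus A$ is $u$-$S$-pseudo-injective; what yours buys is brevity and a uniform treatment in which $\varphi$ itself, rather than a decomposition of $B$, supplies the injectivity needed to feed arbitrary maps into the pseudo-injectivity hypothesis.
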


\begin{proof}
 Let $T=0\oplus \varphi(A)$ and consider the monomorphism $f:T\to A\oplus B$ given by $f(0,\varphi(a))=(a,0)$ for all $a\in A$. Since $A\oplus B$ is $u$-$S$-pseudo-injective, so there is $g\in \text{End}_{R}(A\oplus B)$ such that $sf=g|_{T}$ for some $s\in S$. Let $i_2:B \to A\oplus B$ be the natural injection, $p_1: A\oplus B\to A$ be the natural projection, and $\psi:=p_1gi_2:B\to A$. Then for $a\in A$, we have $$\psi\varphi(a)=p_1gi_2\varphi(a)=p_1g(0,\varphi(a))=p_1sf(0,\varphi(a))=sp_1(a,0)=sa.$$ So $\psi\varphi=s1_A$ and hence $\varphi$ $u$-$S$-splits. By \cite[Lemma 2.8]{KMOZ}, $B$ is $u$-$S$-isomorphic to $A\oplus C$ for some module $C$. By \cite[Proposition 2.4]{KMOZ}, $A\oplus B$ is $u$-$S$-isomorphic to $A\oplus A \oplus C$. Since $A\oplus B$ is $u$-$S$-pseudo-injective, then so is $A\oplus A \oplus C$ by Proposition \ref{prop1} (3). Again, by Proposition \ref{prop1} (2), we have $A\oplus A$ is $u$-$S$-pseudo-injective. Let $K\leq A$ and $K'=K\oplus 0$. Since $A\oplus A$ is $u$-$S$-pseudo-injective, there is $t\in S$ such that for any $u$-$S$-monomorphism $h':K'\to A\oplus A$, there is $g'\in \text{End}_R(A\oplus A)$ such that $th'=g'|_{K'}$. Let $h:K\to A$ be any $R$-homomorphism. Then $h':K'\to A\oplus A$ given by $h'(x,0)=(x,h(x))$, $x\in K$, is a monomorphism. So $th'=g'|_{K'}$ for some $g'\in \text{End}_R(A\oplus A)$. Let $q:A\to A\oplus A$ be the map $x\mapsto (x,0)$, $x\in A$ and $p:A\oplus A\to A$ be the map $(x,y)\mapsto y$, $x,y\in A$. Then $g:=pg'q\in  \text{End}_R(A)$ and for $k\in K$, we have $g(k)=pg'q(k)=pg'(k,0)=pth'(k,0)=tp(k,h(k))=th(k)$. Hence $th=g|_{K}$. Thus, for any $K\leq A$, there is $t\in S$ such that for any $R$-homomorphism $h:K\to A$, $th$ can be extended to $g\in \text{End}_R(A)$. Therefore, by Lemma \ref{lem1}, $A$ is $u$-$S$-quasi-injective.
\end{proof}

\begin{corollary}\label{cor1}
     Let $S$ be a multiplicative subset of a ring $R$ and $M$ an $R$-module. Then $M$ is $u$-$S$-quasi-injective if and only if $M\oplus M$ is $u$-$S$-pseudo-injective. 
     \end{corollary}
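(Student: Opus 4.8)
The plan is to treat the two implications asymmetrically: the reverse implication is an immediate specialization of Theorem \ref{thm2}, while the forward implication requires upgrading $u$-$S$-quasi-injectivity of $M$ to $u$-$S$-quasi-injectivity of $M\oplus M$.

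For the direction ``$M\oplus M$ is $u$-$S$-pseudo-injective $\Rightarrow$ $M$ is $u$-$S$-quasi-injective,'' I would apply Theorem \ref{thm2} with $A=B=M$ and $\varphi=1_M\colon M\to M$. Since $1_M$ is injective, its kernel is $0$ and hence $u$-$S$-torsion, so $1_M$ is a $u$-$S$-monomorphism and the hypotheses of Theorem \ref{thm2} are satisfied by the $u$-$S$-pseudo-injective module $A\oplus B=M\oplus M$. The theorem then states outright that $A=M$ is $u$-$S$-quasi-injective, which closes this direction in a single step.

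For the converse, suppose $M$ is $u$-$S$-quasi-injective. I would prove the stronger assertion that $M\oplus M$ is $u$-$S$-quasi-injective, after which $u$-$S$-pseudo-injectivity follows for free from Remark \ref{rem1}(2). By Definition \ref{def1} and Lemma \ref{lem1}, the hypothesis says precisely that $M$ is $u$-$S$-injective relative to $M$, and the goal says that $M\oplus M$ is $u$-$S$-injective relative to $M\oplus M$. I would bridge the two by the usual two-step transfer of relative injectivity across finite direct sums: splitting the injective-like module gives that $M\oplus M$ is $u$-$S$-injective relative to $M\oplus M$ if and only if $M$ is $u$-$S$-injective relative to $M\oplus M$, and splitting the ambient module gives that $M$ is $u$-$S$-injective relative to $M\oplus M$ if and only if $M$ is $u$-$S$-injective relative to each summand $M$. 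Composing the two reductions returns exactly the hypothesis.

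The main obstacle is the bookkeeping of the uniform element $s\in S$ through these reductions. In the classical setting the analogous statement ``$M$ quasi-injective $\Rightarrow M\oplus M$ quasi-injective'' rests on relative-injectivity lemmas proved by a Zorn's-lemma extension; in the $u$-$S$ setting each extension is achieved only after multiplying by some element of $S$, and the definition of $u$-$S$-(quasi-)injectivity demands a single $s$ valid for every map out of a fixed submodule. I would therefore need the $u$-$S$ versions of the two transfer lemmas stated with explicit control of the multiplier, proved in the same spirit as Theorem \ref{thm2} by tracking how the multipliers accumulate in products, exactly as the factor $st^2$ arises in the proof of Proposition \ref{prop1}(1). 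The only genuine point to verify is that the finitely many multipliers produced along the way can be consolidated into one element of $S$ by taking their product, which again lies in $S$ since $S$ is multiplicatively closed.
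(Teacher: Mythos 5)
Your proposal is correct and takes essentially the same route as the paper: the reverse direction is exactly the paper's one-line appeal to Theorem \ref{thm2} (instantiated at $A=B=M$, $\varphi=1_M$), and your forward direction establishes precisely the statement the paper simply cites from \cite[Proposition 3.8]{MM} --- that $u$-$S$-quasi-injectivity of $M$ transfers to $M\oplus M$ --- before invoking Remark \ref{rem1}(2). The only difference is that you sketch a proof of that transfer result (the two finite-direct-sum reductions of relative injectivity, with the uniform multipliers consolidated into a single product in $S$) instead of citing it, and your sketch, including the flagged uniformity issue, is sound in outline.
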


\begin{proof}
   Suppose that $M$ is $u$-$S$-quasi-injective. Then by \cite[Proposition 3.8 ]{MM}, $M\oplus M$ is $u$-$S$-quasi-injective and hence $M\oplus M$ is $u$-$S$-pseudo-injective by Remark \ref{rem1} (2). The converse follows from Theorem \ref{thm2}.   
\end{proof}

Let $M$ be an $R$-module. For a positive integer $n$, let $M^{(n)}=\underbrace{M\oplus M\oplus\cdots \oplus M}_{n\text{-times}}$.

\begin{corollary}\label{cor2}
  Let $S$ be a multiplicative subset of a ring $R$ and $M$ an $R$-module. For any integer $n\geq 2$, $M$ is $u$-$S$-quasi-injective if and only if $M^{(n)}$ is $u$-$S$-pseudo-injective. 
\end{corollary}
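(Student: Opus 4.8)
The plan is to reduce everything to the case $n=2$, which is precisely Corollary \ref{cor1}, and to treat the two implications separately.

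For the forward implication, suppose $M$ is $u$-$S$-quasi-injective. Here I would invoke \cite[Proposition 3.8]{MM} to conclude that the finite direct power $M^{(n)}$ is again $u$-$S$-quasi-injective (the instance $M^{(2)}=M\oplus M$ is exactly the one already used in the proof of Corollary \ref{cor1}, and for larger $n$ one either applies that proposition in its general form or iterates it). Once $M^{(n)}$ is known to be $u$-$S$-quasi-injective, Remark \ref{rem1} (2) immediately yields that it is $u$-$S$-pseudo-injective.

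For the reverse implication, suppose $M^{(n)}$ is $u$-$S$-pseudo-injective for some $n\geq 2$. The key observation is that $M\oplus M$ sits inside $M^{(n)}$ as a direct summand: writing
$$M^{(n)}=(M\oplus M)\oplus M^{(n-2)},$$
where the second summand is the zero module when $n=2$ (so that $M^{(n)}=M\oplus M$ directly), Proposition \ref{prop1} (2) shows that the summand $M\oplus M$ is itself $u$-$S$-pseudo-injective. Applying the already-established equivalence of Corollary \ref{cor1} to $M\oplus M$ then gives that $M$ is $u$-$S$-quasi-injective, completing the proof.

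The only delicate point is the forward direction's appeal to \cite[Proposition 3.8]{MM} for arbitrary $n$ rather than just for $n=2$; if that result is stated only for $M\oplus M$, one must be slightly careful, since a single doubling step does not raise the exponent by one, and a clean induction requires the proposition in its ``all finite powers'' form. By contrast, the reverse direction is entirely formal once Proposition \ref{prop1} (2) is read as the statement that every direct summand of a $u$-$S$-pseudo-injective module is $u$-$S$-pseudo-injective, and is then combined with Corollary \ref{cor1}.
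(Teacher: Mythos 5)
Your proposal is correct and matches the paper's proof essentially step for step: the forward direction via \cite[Proposition 3.8]{MM} and Remark \ref{rem1} (2), and the reverse direction via the decomposition $M^{(n)}\cong M^{(2)}\oplus M^{(n-2)}$, Proposition \ref{prop1} (2), and Corollary \ref{cor1}. Your cautionary note about the forward direction is moot here, since \cite[Proposition 3.8]{MM} is invoked by the paper in exactly the general form you want (relative injectivity passing to finite direct sums), not merely for $M\oplus M$.
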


\begin{proof}
 ($\Rightarrow$). Since $M$ is $u$-$S$-quasi-injective, $M$ is $u$-$S$-injective relative to $M$. So by \cite[Proposition 3.8]{MM}, $M^{(n)}$ is $u$-$S$-quasi-injective and hence by Remark \ref{rem1} (2), $M^{(n)}$ is $u$-$S$-pseudo-injective. \\
  ($\Leftarrow$). For $n=2$, apply Corollary \ref{cor1}. For $n>2$, since $M^{(2)}\oplus M^{(n-2)}\cong  M^{(n)}$ is $u$-$S$-pseudo-injective, then by Proposition \ref{prop1} (2), $M^{(2)}$ is $u$-$S$-pseudo-injective and hence by Corollary \ref{cor1}, $M$ is $u$-$S$-quasi-injective. 
\end{proof}

In the following example, we will use Corollary \ref{cor1} to construct an example of a $u$-$S$-pseudo-injective module that is not pseudo-injective.

\begin{example}\label{ex2}
    Let $R=\mathbb{Z}$, $S=R\setminus \{0\}$, and $M=R$. Then by \cite[Example 3.7]{MM}, $M$ is a $u$-$S$-quasi-injective module that is not quasi-injective. By Corollary \ref{cor1}, $M\oplus M$ is a $u$-$S$-pseudo-injective module. However, since $M$ is not quasi-injective, then $M\oplus M$ is not pseudo-injective \cite{JS}.
\end{example}

 Let $\mathfrak{p}$ be a prime ideal of a ring $R$. Then $S=R\setminus\mathfrak{p}$ is a multiplicative subset of $R$. We say that an $R$-module $M$ is $u$-$\mathfrak{p}$-pseudo-injective if $M$ is $u$-$S$-pseudo-injective. Another application of Corollary \ref{cor1} is the following example of a $u$-$S$-pseudo-injective module that is not $u$-$S$-injective.
 
\begin{example}\label{ex3}
    Let $R=\mathbb{Z}$ and $M=\mathbb{Z}_2$. Then by \cite[Example 3.5]{MM}, there is a maximal ideal $\mathfrak{m}$ of $R$ such that $M$ is a $u$-$\mathfrak{m}$-quasi-injective module that is not $u$-$\mathfrak{m}$-injective. By Corollary \ref{cor1}, $M\oplus M$ is a $u$-$\mathfrak{m}$-pseudo-injective module. However, $M\oplus M$ is not $u$-$\mathfrak{m}$-injective by \cite[ Corollary 2.8 (2)]{MM}.
\end{example}

\begin{proposition}\label{proplocal}
Let $R$ be a ring and $M$ an $R$-module. If $M$ is $u$-$\mathfrak{m}$-pseudo-injective for every $\mathfrak{m}\in \text{Max}(R)$, then $M$ is pseudo-injective.
\end{proposition}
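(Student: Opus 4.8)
The plan is to prove this by a local-to-global patching argument, exploiting that an ideal contained in no maximal ideal must be all of $R$. First, fix a submodule $K\leq M$ together with a monomorphism $f:K\to M$; the goal is to extend $f$ to an endomorphism of $M$. The key observation is that $f$ has zero kernel, hence it is a $u$-$S$-monomorphism for $S=R\setminus\mathfrak{m}$ and \emph{every} $\mathfrak{m}\in\text{Max}(R)$. Thus, for each such $\mathfrak{m}$, applying the $u$-$\mathfrak{m}$-pseudo-injectivity of $M$ to the submodule $K$ produces an element $s_\mathfrak{m}\in R\setminus\mathfrak{m}$ and an endomorphism $g_\mathfrak{m}\in\text{End}_R(M)$ with $g_\mathfrak{m}|_K=s_\mathfrak{m}f$. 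It matters here that, by Definition \ref{def2}, the element $s_\mathfrak{m}$ depends only on $K$ (not on $f$) and works for every $u$-$\mathfrak{m}$-monomorphism out of $K$, in particular for our fixed $f$.

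Next I would consider the ideal $I$ generated by the family $\{s_\mathfrak{m}:\mathfrak{m}\in\text{Max}(R)\}$. Since $s_\mathfrak{m}\notin\mathfrak{m}$ for each $\mathfrak{m}$, the ideal $I$ lies in no maximal ideal, whence $I=R$. Therefore $1$ can be written as a finite combination $1=\sum_{i=1}^{n}r_is_{\mathfrak{m}_i}$ for some maximal ideals $\mathfrak{m}_1,\dots,\mathfrak{m}_n$ and elements $r_i\in R$.

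Finally I would set $g=\sum_{i=1}^{n}r_ig_{\mathfrak{m}_i}\in\text{End}_R(M)$ and verify that for every $k\in K$,
$$g(k)=\sum_{i=1}^{n}r_ig_{\mathfrak{m}_i}(k)=\sum_{i=1}^{n}r_is_{\mathfrak{m}_i}f(k)=\Big(\sum_{i=1}^{n}r_is_{\mathfrak{m}_i}\Big)f(k)=f(k).$$
Hence $g|_K=f$, so $f$ extends to an endomorphism of $M$; as $K$ and $f$ were arbitrary, $M$ is pseudo-injective.

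I do not anticipate a genuine obstacle: the whole argument reduces to recognizing that the finitely many partial extenders $g_{\mathfrak{m}_i}$ can be combined through a unit-ideal relation, which produces an honest (rather than merely $u$-$S$-) extension with $s=1$. The only point requiring mild care is the quantifier structure in Definition \ref{def2}, which guarantees, for each $\mathfrak{m}$, a single endomorphism $g_\mathfrak{m}$ attached to our fixed monomorphism $f$; this is precisely what allows the linear combination to collapse to $f$ on $K$.
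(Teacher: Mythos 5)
Your proof is correct and is essentially identical to the paper's: for each $\mathfrak{m}\in\text{Max}(R)$ you extract $s_\mathfrak{m}\in R\setminus\mathfrak{m}$ and $g_\mathfrak{m}\in\text{End}_R(M)$ with $g_\mathfrak{m}|_K=s_\mathfrak{m}f$, note that the $s_\mathfrak{m}$ generate the unit ideal so $1=\sum_{i=1}^{n}r_is_{\mathfrak{m}_i}$, and patch via $g=\sum_{i=1}^{n}r_ig_{\mathfrak{m}_i}$, exactly as in the paper. Your side remarks (that a monomorphism is automatically a $u$-$S$-monomorphism since its kernel is zero, and that in Definition \ref{def2} the element $s_\mathfrak{m}$ is chosen uniformly for the submodule $K$) simply make explicit two points the paper's proof uses tacitly.
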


\begin{proof}
    Let $K\leq M$ and $f:K\to M$ be a monomorphism. Then $f:K\to M$ is a $u$-$\mathfrak{m}$-monomorphism for every $\mathfrak{m}\in \text{Max}(R)$. By hypothesis, for every $\mathfrak{m}\in \text{Max}(R)$, there is $s_{\mathfrak{m}}\in R\setminus \mathfrak{m}$ and $g_{\mathfrak{m}}\in \text{End}_R(M)$ such that $s_\mathfrak{m}f=g_\mathfrak{m}|_{K}$. Then $I:=\langle\{s_{\mathfrak{m}}\mid \mathfrak{m}\in \text{Max}(R)\}\rangle =R$, for if $I\neq R$, then there is $\mathfrak{m}\in \text{Max}(R)$ such that $I\subseteq \mathfrak{m}$ which implies $s_\mathfrak{m}\in \mathfrak{m}$, a contradiction. So $I=R$. Hence, there are finite sets $\{r_{\mathfrak{m}_i}\}_{i=1}^{n}\subseteq R$ and $\{s_{\mathfrak{m}_i}\}_{i=1}^{n}\subseteq \{s_{\mathfrak{m}}\mid \mathfrak{m}\in \text{Max}(R)\}$ such that $1=\sum\limits_{i=1}^{n}r_{\mathfrak{m}_i}s_{\mathfrak{m}_i}$. Let $g=\sum\limits_{i=1}^{n}r_{\mathfrak{m}_i}g_\mathfrak{m_i}$. Then $g \in \text{End}_R(M)$ and $f=\sum\limits_{i=1}^{n}r_{\mathfrak{m}_i}s_{\mathfrak{m}_i}f=\sum\limits_{i=1}^{n}r_{\mathfrak{m}_i}(g_\mathfrak{m_i}|_{K})=g|_{K}$. Thus $M$ is pseudo-injective.
\end{proof}

The following proposition shows that the converse of Proposition \ref{proplocal} is true if $R$ is a ring in which every pseudo-injective $R$-module is quasi-injective.

\begin{proposition}
Let $R$ be a ring such that every pseudo-injective $R$-module is quasi-injective, and let $M$ be an $R$-module. Then the following statements are equivalent:
    \begin{enumerate}
          \item[(1)] $M$ is pseudo-injective;
          \item[(2)] $M$ is $u$-$\mathfrak{p}$-pseudo-injective for every $\mathfrak{p}\in \text{Spec}(R)$;
          \item[(3)] $M$ is $u$-$\mathfrak{m}$-pseudo-injective for every $\mathfrak{m}\in \text{Max}(R)$.
      \end{enumerate}

\end{proposition}

\begin{proof}
    $(1)\Rightarrow (2)$: Suppose that $M$ is pseudo-injective. Then $M$ is quasi-injective. So by \cite[Remark 3.2 (2)]{MM} and Remark \ref{rem1} (2), $(2)$ holds. \\
     $(2)\Rightarrow (3)$: Clear.\\
    $(3)\Rightarrow (1)$: This follows from Proposition \ref{proplocal}.
    \end{proof}

\begin{proposition}
  Let $S$ be a multiplicative subset of a ring $R$. If $A\oplus B$ is a $u$-$S$-pseudo-injective module, then $A$ is $u$-$S$-injective relative to $B$ and $B$ is $u$-$S$-injective relative to $A$.
\end{proposition}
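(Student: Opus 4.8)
The plan is to verify the definition of ``$u$-$S$-injective relative to $B$'' directly for $A$: for every $u$-$S$-monomorphism $f:K\to B$ I must show that the cokernel of $\text{Hom}_R(f,A):\text{Hom}_R(B,A)\to \text{Hom}_R(K,A)$ is $u$-$S$-torsion. The statement for $B$ relative to $A$ then follows by symmetry, interchanging the two summands of $A\oplus B$ (equivalently, applying the $A$-case to $B\oplus A$, which is $u$-$S$-pseudo-injective by Proposition \ref{prop1} (3)). The engine is a graph-submodule construction inside $E=A\oplus B$ combined with the hypothesis that $E$ is $u$-$S$-pseudo-injective.

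First I would treat the case where $f$ is the inclusion of an honest submodule $N\leq B$. Given any $\phi:N\to A$, I form the submodule $L=0\oplus N\leq E$ and the map $h:L\to E$, $h(0,n)=(\phi(n),n)$, which is an honest monomorphism, hence a $u$-$S$-monomorphism. Because $E$ is $u$-$S$-pseudo-injective, Definition \ref{def2} attaches to the submodule $L$ a single $s\in S$ --- chosen before the monomorphism $h$, hence independent of $\phi$ --- such that $sh=g|_{L}$ for some $g\in \text{End}_R(E)$. Setting $\psi=p_A g\, i_B:B\to A$, where $i_B:B\to E$ and $p_A:E\to A$ are the canonical maps, and evaluating on $n\in N$ gives $\psi(n)=p_A g(0,n)=p_A(s\phi(n),sn)=s\phi(n)$, so $\psi|_N=s\phi$ with $s$ uniform in $\phi$. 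This shows that the restriction map $\text{Hom}_R(B,A)\to \text{Hom}_R(N,A)$ is a $u$-$S$-epimorphism.

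Next I would reduce a general $u$-$S$-monomorphism $f:K\to B$ to the submodule case. Put $N=f(K)\leq B$; then $f:K\to N$ is a $u$-$S$-isomorphism, so by \cite[Lemma 2.1]{ZQ} there are $t\in S$ and $f':N\to K$ with $f'f=t1_K$. Given $\phi:K\to A$, apply the submodule case to $\phi f':N\to A$ to obtain $\psi:B\to A$ with $\psi|_N=s(\phi f')$, the same $s$ serving all $\phi$. Then for $k\in K$ we get $\psi f(k)=s\phi f'(f(k))=s\phi(tk)=(st)\phi(k)$, that is $(st)\phi=\text{Hom}_R(f,A)(\psi)$. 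Since $st\in S$ is independent of $\phi$, the cokernel of $\text{Hom}_R(f,A)$ is annihilated by $st$, hence $u$-$S$-torsion, and $A$ is $u$-$S$-injective relative to $B$.

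The main obstacle is the mismatch of quantifiers: relative injectivity is phrased for arbitrary $u$-$S$-monomorphisms $f$, whereas the graph trick only manufactures a bona fide monomorphism $h$ when $f$ is injective, since for non-injective $f$ the assignment $f(k)\mapsto(\phi(k),f(k))$ need not be well defined. The $u$-$S$-inverse $f'$ from \cite[Lemma 2.1]{ZQ} is precisely what bridges this gap. A secondary point to handle carefully is the uniformity of $s$ over all $\phi$; this is not arranged by hand but delivered automatically by Definition \ref{def2}, because there the element $s$ is attached to the submodule $L$ before any choice of the $u$-$S$-monomorphism $h=h_\phi$.
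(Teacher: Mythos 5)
Your proposal is correct and rests on exactly the paper's central device: the graph submodule $0\oplus N$ of $A\oplus B$ together with the map $(0,n)\mapsto(\phi(n),n)$, which is injective for \emph{arbitrary} $\phi$, so that $u$-$S$-pseudo-injectivity supplies an $s$ uniform in $\phi$ and $\psi=p_Agi_B$ restricts to $s\phi$, with the statement for $B$ obtained by swapping summands, just as in the paper. The only divergence is bookkeeping: the paper stops at the submodule case and invokes the characterization \cite[Theorem 2.4]{MM} to conclude relative injectivity, whereas you inline that reduction, passing from an arbitrary $u$-$S$-monomorphism $f:K\to B$ to the inclusion of $f(K)$ via \cite[Lemma 2.1]{ZQ} (the same maneuver the paper uses in Proposition \ref{prop1}(4)) --- an equivalent, self-contained finish that also verifies Definition \ref{def1}(i) directly.
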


\begin{proof}
   Assume that $A\oplus B$ is $u$-$S$-pseudo-injective. To show $A$ is $u$-$S$-injective relative to $B$, let $K\leq B$. By hypothesis, there is $s\in S$ such that for any $u$-$S$-monomorphism $h:0\oplus K\to A\oplus B$, there is $g\in \text{End}_R(A\oplus B)$ such that $sh=g|_{0\oplus K}$. Let $f:K\to A$ be any homomorphism. Consider the monomorphism $h:0\oplus K\to A\oplus B$ given by $h(0,k)=(f(k),k)$, $k\in K$. Then $sh=g|_{0\oplus K}$ for some $g\in \text{End}_R(A\oplus B)$. Let $i_2:B\to A\oplus B$ and $p_1:A\oplus B\to A$ be the natural injection and projection, respectively, and let $g'=p_1gi_2:B\to A$. Then for $k\in K$, we have $g'(k)=(p_1gi_2)(k)=p_1g(0,k)=sp_1h(0,k)=sp_1(f(k),k)=sf(k)$. So $sf=g'|_K=g'i_K$, where $i_K:K\to B$ is the inclusion map. Thus, for any $K\leq B$, the map $(i_K)^*:\text{Hom}(B,A)\to \text{Hom}(K,A)$ is a $u$-$S$-epimorphism. Hence, by \cite[Theorem 2.4]{MM}, $A$ is $u$-$S$-injective relative to $B$. Since $B\oplus A\cong A\oplus B$ is $u$-$S$-pseudo-injective, then by above, $B$ is $u$-$S$-injective relative to $A$.
\end{proof}

 The last result of this section gives a new characterization of $u$-$S$-semisimple rings in terms of $u$-$S$-pseudo-injective modules.

\begin{thm}\label{thm4}
Let $S$ be a multiplicative subset of a ring $R$. Then $R$ is $u$-$S$-semisimple if and only if every $R$-module is $u$-$S$-pseudo-injective.    
\end{thm}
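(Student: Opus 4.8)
The plan is to prove both implications, exploiting the characterization of $u$-$S$-semisimplicity already available in the excerpt. For the forward direction, I would argue that if $R$ is $u$-$S$-semisimple, then every $R$-module is $u$-$S$-semisimple. This is the natural expectation: $u$-$S$-semisimplicity of $R$ means every free module is $u$-$S$-semisimple, and since an arbitrary module $M$ is a quotient of a free module, one should be able to transport the $u$-$S$-splitting property along the canonical surjection $F \to M \to 0$. Granting that every $R$-module $M$ is $u$-$S$-semisimple, Remark \ref{rem1}(3) immediately yields that $M$ is $u$-$S$-pseudo-injective, which closes this direction with essentially no further work.

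For the converse, suppose every $R$-module is $u$-$S$-pseudo-injective; I want to show every $u$-$S$-short exact sequence $0 \to A \xrightarrow{f} M \xrightarrow{g} C \to 0$ is $u$-$S$-split. The key observation is Proposition \ref{prop1}(4): in a $u$-$S$-pseudo-injective module, every $u$-$S$-monomorphism into itself $u$-$S$-splits. So the strategy is to manufacture a single module whose $u$-$S$-pseudo-injectivity forces the splitting of the given map $f$. The natural candidate is $M \oplus M$ (or $A \oplus M$): one embeds $A$ into the second summand via $a \mapsto (0, f(a))$ and engineers a $u$-$S$-monomorphism of $M \oplus M$ into itself whose $u$-$S$-splitting restricts to a retraction of $f$. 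Alternatively, and perhaps more cleanly, I would invoke Theorem \ref{thm2}: since $M \oplus M$ is $u$-$S$-pseudo-injective by hypothesis and $f : A \to M$ is a $u$-$S$-monomorphism, Theorem \ref{thm2} directly gives that $f$ $u$-$S$-splits. One must check that the generic $u$-$S$-split sequence built from this retraction matches the definition of $u$-$S$-semisimple, i.e. that a single $s \in S$ and retraction $f'$ with $f'f = s1_A$ exist, which is exactly the output of Theorem \ref{thm2}.

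The main obstacle I anticipate is the forward direction, specifically the passage from ``every free module is $u$-$S$-semisimple'' to ``every module is $u$-$S$-semisimple.'' This requires showing that the class of $u$-$S$-semisimple modules is closed under quotients (or homomorphic images), and that a $u$-$S$-short exact sequence terminating in an arbitrary $M$ can be pulled back to one involving a free module where the splitting is known. This likely needs a pullback/pushout diagram argument together with the $u$-$S$-split machinery of \cite{ZQ}, and care must be taken because the element $s \in S$ witnessing the split may vary; one must confirm a uniform choice survives the diagram chase. If a suitable closure result for $u$-$S$-semisimple modules is already recorded in \cite{ZQ}, this difficulty dissolves and the forward direction becomes a one-line citation; otherwise it is the technical heart of the proof. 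The converse, by contrast, should follow almost formally once Theorem \ref{thm2} is applied to $M \oplus M$.
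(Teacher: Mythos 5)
Your converse is correct and is, at bottom, the paper's own argument: the paper applies Corollary \ref{cor1} (itself a consequence of Theorem \ref{thm2}) to deduce from the $u$-$S$-pseudo-injectivity of $M\oplus M$ that every module $M$ is $u$-$S$-quasi-injective, and then cites \cite[Theorem 3.11]{MM}; you instead apply Theorem \ref{thm2} directly to split an arbitrary $u$-$S$-exact sequence $0\to A\xrightarrow{f} M\to C\to 0$, which verifies the definition of $u$-$S$-semisimplicity without that citation. Your verification is sound: $u$-$S$-exactness at $A$ gives that $\text{Ker}(f)$ is $u$-$S$-torsion, so $f$ is a $u$-$S$-monomorphism, and a retraction $\psi$ with $\psi f=s1_A$ is exactly the definition of $u$-$S$-split in (v). One slip in the decisive sentence: Theorem \ref{thm2} requires the $u$-$S$-pseudo-injectivity of $A\oplus M$ (the sum of the domain and the codomain of $f$), not of $M\oplus M$. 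Since by hypothesis \emph{every} module is $u$-$S$-pseudo-injective, this is harmless and you did name $A\oplus M$ as an alternative candidate, but the instance you actually invoke is the wrong one.

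The genuine gap is the forward direction, and you flag it yourself: your plan needs that $u$-$S$-semisimplicity passes from free modules to all their quotients, and you do not carry out the pullback/pushout chase or the bookkeeping ensuring a uniform $s\in S$ survives it. That work is unnecessary, because the needed fact is already on record in a stronger form: over a $u$-$S$-semisimple ring every module is $u$-$S$-injective (\cite[Theorem 3.5]{ZQ}, quoted in Example \ref{ex1}(2) of this paper), and the paper itself uses the equivalent statement that every module is $u$-$S$-quasi-injective \cite[Theorem 3.11]{MM}; either way Remark \ref{rem1}(2) finishes the direction in one line, rather than via the quotient-closure route you sketched (whose intermediate claim, that every module over a $u$-$S$-semisimple ring is $u$-$S$-semisimple, you would otherwise have to prove from scratch). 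So: replace the free-presentation argument by that citation, correct $M\oplus M$ to $A\oplus M$ in the converse, and your proof is complete and essentially the paper's.
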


\begin{proof}
    Let $R$ be a $u$-$S$-semisimple ring. Then by \cite[Theorem 3.11]{MM}, every $R$-module is $u$-$S$-quasi-injective and hence by Remark \ref{rem1} (2), every $R$-module is $u$-$S$-pseudo-injective. Conversely, let $M$ be any $R$-module. Then by hypothesis, $M\oplus M$ is $u$-$S$-pseudo-injective. Hence, by Corollary \ref{cor1}, $M$ is $u$-$S$-quasi-injective. Thus, every $R$-module is $u$-$S$-quasi-injective. Again by \cite[Theorem 3.11]{MM}, $R$ is $u$-$S$-semisimple.
\end{proof}

\section{$Q$$u$-$S$-$I$-rings and $u$-$S$-$Q$$u$-$S$-$I$-rings}

In this section, we introduce two classes of rings related to the class of $QI$-rings. Recall that a ring $R$ is called a $QI$-ring if every quasi-injective $R$-module is injective \cite{JLS}.

\begin{definition}
   Let $R$ be a ring and $S$ a multiplicative subset of $R$. We say that
   
   \begin{enumerate}
       \item[(1)] $R$ is a $Q$$u$-$S$-$I$-ring if every quasi-injective $R$-module is $u$-$S$-injective.
       \item[(2)] $R$ is a $u$-$S$-$Q$$u$-$S$-$I$-ring if every $u$-$S$-quasi-injective $R$-module is $u$-$S$-injective.
   \end{enumerate}
\end{definition}

Recall that a ring $R$ is called an $SSI$-ring if every semisimple $R$-module is injective \cite{B}.

\begin{remark}\label{rem2}
We have the following implications:
\[
\begin{array}
[c]{cccccccc}
& & \text{semisimple rings} &  & \Leftrightarrow & & \text{$QI$-rings } &\\
& & \Downarrow  & &  && \Downarrow & \\
 & & \text{$u$-$S$-semisimple rings } & \Rightarrow & \text{$u$-$S$-$Q$$u$-$S$-$I$-rings }&\Rightarrow & \text{$Q$$u$-$S$-$I$-rings} &
\end{array}
\] 
\end{remark}

\begin{proof}
 Clearly, every semisimple ring is both a $QI$-ring and a $u$-$S$-semisimple ring. By \cite[the Corollary after Proposition 1]{B} and since every $QI$-ring is an $SSI$-ring, we have every commutative $QI$-ring is semisimple.
 
 Next, let $R$ be a $u$-$S$-semisimple ring. Then every $R$-module is $u$-$S$-injective by \cite[Theorem 3.5]{ZQ}. In particular, every $u$-$S$-quasi-injective $R$-module is $u$-$S$-injective. Thus $R$ is a $u$-$S$-$Q$$u$-$S$-$I$-ring. 
 
 Finally, let $R$ be a $u$-$S$-$Q$$u$-$S$-$I$-ring. Then every $u$-$S$-quasi-injective $R$-module is $u$-$S$-injective. Since every quasi-injective is $u$-$S$-quasi-injective by \cite[Remark 3.2 (2)]{MM}, then every quasi-injective $R$-module is $u$-$S$-injective. Hence $R$ is a $Q$$u$-$S$-$I$-ring.
\end{proof}

\begin{example}\label{ex1}
(1) The converse of the implication
\begin{center}
    semisimple rings $\Rightarrow$ $u$-$S$-semisimple rings
\end{center}is not true in general by \cite[Example 3.11]{ZQ}.\\[0.1cm]
(2) The converse of the implication \begin{center}
    $QI$-rings $\Rightarrow$ $Q$$u$-$S$-$I$-rings
\end{center} is not true in general. To see this, let $R$ and $S$ be as in \cite[Example 3.11]{ZQ}, then $R$ is a $u$-$S$-semisimple ring that is not a semisimple ring. So by Remark \ref{rem2}, $R$ is a $Q$$u$-$S$-$I$-ring that is not a $QI$-ring.
                
\end{example}

Let $S$ be a multiplicative subset of a ring $R$, $M$ an $R$-module, and $N$ a submodule of $M$. Recall that 

\begin{enumerate}
    \item[(1)] $N$ is called fully invariant in $M$ if $f(N) \subseteq N$ for every $f \in End_{R}(M)$ \cite{NY}.
    \item[(2)] $N$ is called a $u$-$S$-direct summand of $M$ if $M$ is $u$-$S$-isomorphic to $N\oplus N'$ for some $R$-module $N'$ \cite{KMOZ}.
\end{enumerate} 

Recall that an $R$-module $M$ is quasi-injective if and only if it is fully invariant in its injective envelope $E(M)$ \cite{AF}. The following result gives some characterizations of the $Q$$u$-$S$-$I$-rings.

\begin{thm}\label{thm1}
  Let $S$ be a multiplicative subset of a ring $R$. Then the following statements are equivalent:
   \begin{enumerate}
          \item[(1)] $R$ is a $Q$$u$-$S$-$I$-ring;
          \item[(2)] Every direct sum of two quasi-injective modules is $u$-$S$-quasi-injective;
          \item[(3)] Every fully invariant submodule of an injective module is a $u$-$S$-direct summand.
      \end{enumerate}
  \end{thm}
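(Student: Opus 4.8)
The plan is to prove the cycle $(1)\Rightarrow(2)\Rightarrow(3)\Rightarrow(1)$. Two permanence facts will carry most of the argument. First, I would record that a fully invariant submodule $N$ of an injective module $E$ is automatically quasi-injective: taking an injective envelope $E(N)$ of $N$ inside $E$ (possible since $E$ is injective), every $h\in\text{End}_R(E(N))$ extends along $E(N)\hookrightarrow E$ to some $\tilde h\in\text{End}_R(E)$ by injectivity of $E$, and full invariance of $N$ in $E$ forces $h(N)=\tilde h(N)\subseteq N$; hence $N$ is fully invariant in $E(N)$ and so quasi-injective by \cite{AF}. Second, I would use that the class of $u$-$S$-injective modules is closed under finite direct sums, under $u$-$S$-isomorphism, and under direct summands, which are the expected permanence properties coming from \cite{QK}.

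For $(1)\Rightarrow(2)$, given quasi-injective modules $A$ and $B$, the hypothesis makes $A$ and $B$ $u$-$S$-injective, so $A\oplus B$ is $u$-$S$-injective by closure under finite direct sums, and hence $u$-$S$-quasi-injective by Remark \ref{rem1}(2). For $(2)\Rightarrow(3)$, I would take a fully invariant submodule $N$ of an injective module $E$; by the first fact $N$ is quasi-injective and $E$ is quasi-injective, so $(2)$ gives that $N\oplus E$ is $u$-$S$-quasi-injective, in particular $u$-$S$-pseudo-injective by Remark \ref{rem1}(2). The inclusion $\varphi:N\hookrightarrow E$ is a $u$-$S$-monomorphism, so Theorem \ref{thm2} applies and $\varphi$ $u$-$S$-splits; then \cite[Lemma 2.8]{KMOZ} shows $E$ is $u$-$S$-isomorphic to $N\oplus C$ for some $C$, i.e.\ $N$ is a $u$-$S$-direct summand of $E$.

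For $(3)\Rightarrow(1)$, let $M$ be quasi-injective. Then $M$ is fully invariant in $E(M)$ by \cite{AF}, so $(3)$ makes $M$ a $u$-$S$-direct summand of $E(M)$, say $E(M)$ is $u$-$S$-isomorphic to $M\oplus C$. Since $E(M)$ is injective it is $u$-$S$-injective by \cite[Corollary 4.4]{QK}; transporting $u$-$S$-injectivity across the $u$-$S$-isomorphism gives that $M\oplus C$ is $u$-$S$-injective, and passing to a direct summand shows $M$ is $u$-$S$-injective. Thus every quasi-injective module is $u$-$S$-injective, i.e.\ $R$ is a $Q$$u$-$S$-$I$-ring.

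I expect the main obstacle to be the step $(2)\Rightarrow(3)$: the key insight is that a fully invariant submodule of an injective module is itself quasi-injective, which is exactly what lets me feed it into hypothesis $(2)$ and then into Theorem \ref{thm2}; converting the resulting $u$-$S$-splitting into a genuine $u$-$S$-direct summand via \cite[Lemma 2.8]{KMOZ} is then routine. The other place needing care is the collection of permanence properties of $u$-$S$-injectivity used in $(1)\Rightarrow(2)$ and $(3)\Rightarrow(1)$; if any of these are not already available, I would first prove closure of $u$-$S$-injective modules under finite direct sums and under $u$-$S$-isomorphism directly from the definition by applying the Hom functor and combining the relevant elements of $S$.
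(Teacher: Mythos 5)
Your proof is correct, but it is organized differently from the paper's. The paper proves the equivalence with $(1)$ as a hub: $(1)\Leftrightarrow(2)$ and $(1)\Leftrightarrow(3)$, whereas you prove the cycle $(1)\Rightarrow(2)\Rightarrow(3)\Rightarrow(1)$. Your $(1)\Rightarrow(2)$ and $(3)\Rightarrow(1)$ coincide with the paper's (same permanence properties of $u$-$S$-injectivity, citing \cite[Proposition 4.7]{QK} and \cite[Corollary 2.8]{MM}, and the same use of \cite{AF} for full invariance of a quasi-injective module in its injective envelope). The genuine divergence is your direct $(2)\Rightarrow(3)$: you first establish the key auxiliary fact that a fully invariant submodule $N$ of an injective $E$ is quasi-injective (the paper proves exactly this fact, with the same extension-of-endomorphisms argument, but deploys it inside $(1)\Rightarrow(3)$), then feed $N\oplus E$ into hypothesis $(2)$ and invoke Theorem \ref{thm2} on the inclusion $N\hookrightarrow E$ to get the $u$-$S$-splitting, converting it to a $u$-$S$-direct summand via \cite[Lemma 2.8]{KMOZ}. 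The paper instead proves $(2)\Rightarrow(1)$ directly: it embeds a quasi-injective $M$ into an injective $E$, applies Lemma \ref{lem1} to the $u$-$S$-quasi-injective module $M\oplus E$ with the monomorphism $i_2f$ to manufacture a retraction $p_1gi_2f=s1_M$ by hand, never using Theorem \ref{thm2}. Your route is slightly more economical because it reuses the splitting machinery already packaged in Theorem \ref{thm2}, tying Section 2 to Section 3; the paper's route keeps Theorem \ref{thm1} independent of the pseudo-injectivity results and yields each implication into and out of $(1)$ without passing through a third condition. Both arguments are complete and all the facts you use (Theorem \ref{thm2}, Remark \ref{rem1}(2), closure of $u$-$S$-injectives under finite direct sums, $u$-$S$-isomorphisms, and $u$-$S$-direct summands) are available in the paper and its references, so your fallback worry about the permanence properties is unnecessary.
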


\begin{proof}
   $(1)\Rightarrow (2)$: Let $M$ and $N$ be two quasi-injective modules. Since $R$ is a $Q$$u$-$S$-$I$-ring, then $M$ and $N$ are $u$-$S$-injective and hence $M\oplus N$ is $u$-$S$-injective by \cite[Proposition 4.7 (1)]{QK}. Thus $M\oplus N$ is $u$-$S$-quasi-injective by \cite[Remark 3.2 (1)]{MM}. \\
   $(2)\Rightarrow (1)$: Let $M$ be a quasi-injective module. Let $f:M\to E$ be a monomorphism with $E$ injective. Then by (2), $M\oplus E$ is $u$-$S$-quasi-injective. Now, let $i_1:M\to M\oplus E$ and $i_2:E\to M\oplus E$ be the natural injections. If $p_1:M\oplus E\to M$ is the natural projection, then $p_1i_1=1_M$. Since $M\oplus E$ is $u$-$S$-quasi-injective and $M\xrightarrow{f} E\xrightarrow{i_2} M\oplus E$ is a monomorphism, then by Lemma \ref{lem1}, there is $g\in \text{End}_R(M\oplus E)$ such that $si_1=gi_2f$ for some $s\in S$. So $s1_M=sp_1i_1=p_1si_1=p_1gi_2f=f'f$, where $f':=p_1gi_2:E\to M$. Hence, the exact sequence $0\to M\xrightarrow{f} E\to \frac{E}{\text{Im}(f)}\to 0$ is $u$-$S$-split. By \cite[Lemma 2.8]{KMOZ}, $E$ is $u$-$S$-isomorphic to $M\oplus \frac{E}{\text{Im}(f)}$. But $E$ is $u$-$S$-injective, so by \cite[Proposition 4.7 (3)]{QK}, $M\oplus \frac{E}{\text{Im}(f)}$ is $u$-$S$-injective. Thus $M$ is $u$-$S$-injective by \cite[Corollary 2.8 (2)]{MM}. Therefore, $R$ is a $Q$$u$-$S$-$I$-ring.\\
$(1)\Rightarrow (3)$: Let $M$ be an injective module and $N$ be a fully invariant submodule of $M$. Let $f\in \text{End}_R(E(N))$. Since $M$ is injective, so $E(M)=M$. Let $i:E(N)\to M$ be the inclusion map. Since $M$ is injective, then there is $g\in \text{End}_R(M)$ such that the following diagram 
\[\xymatrix{
& M &\\
&E(N)\ar[u]^{if} \ar[r]^{i}&M\ar@{-->}[lu]_{g}
}\]
commutes. Since $N$ is fully invariant in $M$, then $g(N)\subseteq N$. So $f(N)=i(f(N))=g(N)\subseteq N$. Hence $N$ is fully invariant in $E(N)$ and thus $N$ is quasi-injective. By $(1)$, $N$ is $u$-$S$-injective. It follows that the exact sequence $0\to N\to M\to \frac{M}{N}\to 0$ is $u$-$S$-split. Thus, by \cite[Lemma 2.8]{KMOZ}, $M$ is $u$-$S$-isomorphic to $N\oplus \frac{M}{N}$. Therefore, $N$ is a $u$-$S$-direct summand of $M$.\\
$(3)\Rightarrow (1)$:  Let $M$ be a quasi-injective module. Then $M$ is fully invariant in $E(M)$. By $(3)$, $M$ is a $u$-$S$-direct summand of $E(M)$. Since $E(M)$ is $u$-$S$-injective, then $M$ is $u$-$S$-injective by \cite[Proposition 4.7 (3)]{QK} and \cite[Corollary 2.8 (2)]{MM}. Thus $R$ is a $Q$$u$-$S$-$I$-ring.
\end{proof}

The following result gives a characterization of the $u$-$S$-$Q$$u$-$S$-$I$-rings.

\begin{thm}\label{thrm1} 
   Let $S$ be a multiplicative subset of a ring $R$. Then the following statements are equivalent:
    \begin{enumerate}
          \item[(1)] $R$ is a $u$-$S$-$Q$$u$-$S$-$I$-ring;
          \item[(2)] Every direct sum of two $u$-$S$-quasi-injective modules is $u$-$S$-quasi-injective.
      \end{enumerate}
\end{thm}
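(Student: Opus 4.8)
The plan is to follow the pattern of Theorem \ref{thm1}, exploiting the fact that injective modules already lie inside the class of $u$-$S$-quasi-injective modules. The implication $(1)\Rightarrow(2)$ is the routine direction: if $M$ and $N$ are $u$-$S$-quasi-injective, then the hypothesis forces both to be $u$-$S$-injective, and a finite direct sum of $u$-$S$-injective modules is again $u$-$S$-injective by \cite[Proposition 4.7 (3)]{QK}; since $u$-$S$-injective modules are $u$-$S$-quasi-injective (Remark \ref{rem1} (2)), it follows that $M\oplus N$ is $u$-$S$-quasi-injective.

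For the substantive direction $(2)\Rightarrow(1)$, I would take an arbitrary $u$-$S$-quasi-injective module $M$ and embed it via a monomorphism $f:M\to E$ into its injective envelope $E=E(M)$. The key observation is that $E$, being injective, is $u$-$S$-injective by \cite[Corollary 4.4]{QK} and hence $u$-$S$-quasi-injective; consequently $M\oplus E$ is a direct sum of two $u$-$S$-quasi-injective modules, so by hypothesis $(2)$ it is $u$-$S$-quasi-injective. From here the argument runs exactly as in $(2)\Rightarrow(1)$ of Theorem \ref{thm1}: writing $i_1,i_2$ for the natural injections into $M\oplus E$ and $p_1$ for the projection onto $M$, one applies Lemma \ref{lem1} to the monomorphism $i_2f:M\to M\oplus E$ to obtain $s\in S$ and $g\in\text{End}_R(M\oplus E)$ with $si_1=gi_2f$, whence $s1_M=p_1si_1=p_1gi_2f$. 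This shows that the exact sequence $0\to M\xrightarrow{f}E\to E/\text{Im}(f)\to 0$ is $u$-$S$-split, so by \cite[Lemma 2.8]{KMOZ} the module $E$ is $u$-$S$-isomorphic to $M\oplus E/\text{Im}(f)$. Since $E$ is $u$-$S$-injective, \cite[Proposition 4.7 (3)]{QK} gives that $M\oplus E/\text{Im}(f)$ is $u$-$S$-injective, and \cite[Corollary 2.8 (2)]{MM} then yields that the $u$-$S$-direct summand $M$ is $u$-$S$-injective. Thus every $u$-$S$-quasi-injective module is $u$-$S$-injective, and $R$ is a $u$-$S$-$Q$$u$-$S$-$I$-ring.

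I do not anticipate a genuine obstacle: the only point requiring care is noticing that replacing ``quasi-injective'' by ``$u$-$S$-quasi-injective'' costs nothing on the injective-envelope side, because injectivity already implies $u$-$S$-quasi-injectivity through the chain of Remark \ref{rem1} (2). Once $M\oplus E$ is seen to fall under hypothesis $(2)$, the splitting-and-summand machinery of \cite{KMOZ}, \cite{QK}, and \cite{MM} applies verbatim, so the proof is a direct adaptation of Theorem \ref{thm1} rather than a new argument.
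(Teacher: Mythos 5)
Your proposal is correct and is exactly what the paper intends: the paper's proof of Theorem \ref{thrm1} simply says it is similar to the proof of $(1)\Leftrightarrow(2)$ in Theorem \ref{thm1}, and your argument spells out that adaptation verbatim, correctly noting that the only change needed is that $E=E(M)$ is $u$-$S$-quasi-injective (via \cite[Corollary 4.4]{QK} and Remark \ref{rem1} (2)) so that hypothesis $(2)$ applies to $M\oplus E$.
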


\begin{proof}
The proof is similar to the proof of $(1)\Leftrightarrow (2)$ in Theorem \ref{thm1}. 
\end{proof}

\begin{corollary}
    Let $S$ be a multiplicative subset of a ring $R$ and $A,B$ be $R$-modules. Let $R$ be a $u$-$S$-$Q$$u$-$S$-$I$-ring. Then $A\oplus B$ is $u$-$S$-quasi-injective if and only if $A$ and $B$ are $u$-$S$-quasi-injective. 
\end{corollary}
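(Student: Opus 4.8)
The plan is to treat the two implications separately: the backward direction follows almost immediately from the characterization just proved, while the forward direction is where the ring hypothesis does the real work, and I would route it through $u$-$S$-injectivity rather than attempt a direct argument.

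For the backward direction, suppose $A$ and $B$ are $u$-$S$-quasi-injective. Since $R$ is a $u$-$S$-$Q$$u$-$S$-$I$-ring, statement (1) of Theorem \ref{thrm1} holds, and hence so does statement (2): the direct sum of any two $u$-$S$-quasi-injective modules is again $u$-$S$-quasi-injective. Applying this to $A$ and $B$ yields at once that $A\oplus B$ is $u$-$S$-quasi-injective, with no further computation required.

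For the forward direction, suppose $A\oplus B$ is $u$-$S$-quasi-injective. The key observation is that one cannot simply assert that a direct summand of a $u$-$S$-quasi-injective module is $u$-$S$-quasi-injective; as in the classical theory, quasi-injectivity is not inherited by direct summands, so a direct attack is blocked. Instead I would first upgrade to $u$-$S$-injectivity: because $R$ is a $u$-$S$-$Q$$u$-$S$-$I$-ring, the $u$-$S$-quasi-injective module $A\oplus B$ is in fact $u$-$S$-injective. Now $u$-$S$-injectivity, unlike $u$-$S$-quasi-injectivity, \emph{does} pass to direct summands, so by \cite[Corollary 2.8 (2)]{MM} both $A$ and $B$ are $u$-$S$-injective. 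Finally, every $u$-$S$-injective module is $u$-$S$-quasi-injective by Remark \ref{rem1} (2), whence $A$ and $B$ are $u$-$S$-quasi-injective.

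The argument is a short chain of implications, and the only non-routine input is the heredity of $u$-$S$-injectivity under direct summands supplied by \cite[Corollary 2.8 (2)]{MM}. The conceptual point, and the place where the hypothesis on $R$ is indispensable, is precisely that being a $u$-$S$-$Q$$u$-$S$-$I$-ring is what licenses the passage from $u$-$S$-quasi-injective to $u$-$S$-injective in the forward direction; without it the statement would fail in general, since $u$-$S$-quasi-injectivity alone need not descend to summands.
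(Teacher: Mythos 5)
Your backward direction coincides with the paper's: it is an immediate application of Theorem \ref{thrm1}. Your forward direction is logically valid but takes a genuinely different route from the paper, and the reason you give for choosing it is mathematically wrong. The paper deduces ($\Rightarrow$) directly from \cite[Proposition 3.8]{MM}, making no use whatsoever of the $u$-$S$-$Q$$u$-$S$-$I$ hypothesis, because passage to direct summands is unproblematic here. Contrary to your ``key observation,'' quasi-injectivity \emph{is} inherited by direct summands in the classical theory (if $M=A\oplus B$ is fully invariant in $E(M)=E(A)\oplus E(B)$, then $A$ is fully invariant in $E(A)$), and the same elementary argument works uniformly in the $u$-$S$ setting: given $K\leq A$, take the $s\in S$ supplied for $K$ regarded as a submodule of $A\oplus B$; for any $f:K\to A$, extend $s\,i_Af$ to some $g\in \text{End}_R(A\oplus B)$, and then $p_A g i_A\in \text{End}_R(A)$ extends $sf$, with $s$ independent of $f$ as Lemma \ref{lem1} requires. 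What fails classically is closure of quasi-injectives under direct \emph{sums}, which is precisely why the ring hypothesis is needed for ($\Leftarrow$) and not for ($\Rightarrow$).

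Your detour --- upgrading $A\oplus B$ to $u$-$S$-injective via the ring hypothesis, descending to the summands by \cite[Corollary 2.8 (2)]{MM}, and finishing with Remark \ref{rem1} (2) --- is a correct chain of implications, so the corollary is in fact proved. But it buys nothing over the direct argument, it invokes the hypothesis where it is not needed, and your closing claim that without the hypothesis ``the statement would fail in general, since $u$-$S$-quasi-injectivity alone need not descend to summands'' is false: the forward implication holds over every ring. You should strike that paragraph and either cite \cite[Proposition 3.8]{MM} as the paper does or include the two-line summand argument above.
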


\begin{proof}
    ($\Rightarrow$). This follows from \cite[Proposition 3.8]{MM}. \\
    ($\Leftarrow$). This follows from Theorem \ref{thrm1}.
\end{proof}

Let $\mathfrak{p}$ be a prime ideal of a ring $R$. We say that $R$ is a $Q$$u$-$\mathfrak{p}$-$I$-ring ($u$-$\mathfrak{p}$-$Q$$u$-$\mathfrak{p}$-$I$-ring) if $R$ is a $Q$$u$-$S$-$I$-ring ($u$-$S$-$Q$$u$-$S$-$I$-ring), where $S=R\setminus \mathfrak{p}$. The following proposition gives a local characterization of the $QI$-rings. 

\begin{proposition}\label{plocal}
 Let $R$ be a ring. Then the following statements are equivalent:
     \begin{enumerate}
          \item[(1)] $R$ is a $QI$-ring;
          \item[(2)] $R$ is a $Q$$u$-$\mathfrak{p}$-$I$-ring for every $\mathfrak{p}\in \text{Spec}(R)$;
          \item[(3)] $R$ is a $Q$$u$-$\mathfrak{m}$-$I$-ring for every $\mathfrak{m}\in \text{Max}(R)$;
          \item[(4)] $R$ is a semisimple ring.
      \end{enumerate}

\end{proposition}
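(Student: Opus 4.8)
The plan is to prove the cycle $(1)\Rightarrow(2)\Rightarrow(3)\Rightarrow(1)$, with the first two implications essentially formal and all the content residing in $(3)\Rightarrow(1)$. For $(1)\Rightarrow(2)$, if $R$ is a $QI$-ring then Example \ref{ex1}(1), applied to the multiplicative set $S=R\setminus\mathfrak{p}$, shows that $R$ is a $Q$-$u$-$S$-$I$-ring, that is, a $Q$-$u$-$\mathfrak{p}$-$I$-ring, for every $\mathfrak{p}\in\text{Spec}(R)$. The implication $(2)\Rightarrow(3)$ is immediate, since every maximal ideal is prime.

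For $(3)\Rightarrow(1)$, I would first isolate the local--global statement that does the work: any $R$-module $M$ which is $u$-$\mathfrak{m}$-injective for every $\mathfrak{m}\in\text{Max}(R)$ is injective. Granting this, let $M$ be quasi-injective; hypothesis $(3)$ says exactly that $M$ is $u$-$\mathfrak{m}$-injective for each maximal ideal $\mathfrak{m}$, so the statement forces $M$ to be injective and hence $R$ to be a $QI$-ring. To prove the local--global statement, fix an injective envelope $E=E(M)$ with inclusion $\iota\colon M\hookrightarrow E$, and apply the definition of $u$-$S$-injectivity in the short-exact form of \cite[Theorem 4.3]{QK} to the sequence $0\to M\xrightarrow{\iota}E\to E/M\to 0$. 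For $S=R\setminus\mathfrak{m}$ this makes
\[
0\to\operatorname{Hom}_R(E/M,M)\to\operatorname{Hom}_R(E,M)\xrightarrow{\iota^{*}}\operatorname{Hom}_R(M,M)\to 0
\]
a $u$-$(R\setminus\mathfrak{m})$-exact sequence, so $\iota^{*}$ is a $u$-$(R\setminus\mathfrak{m})$-epimorphism and its cokernel is $u$-$(R\setminus\mathfrak{m})$-torsion. Applying this to $1_M\in\operatorname{Hom}_R(M,M)$ produces $s_{\mathfrak{m}}\in R\setminus\mathfrak{m}$ and $\pi_{\mathfrak{m}}\in\operatorname{Hom}_R(E,M)$ with $\pi_{\mathfrak{m}}\iota=s_{\mathfrak{m}}1_M$.

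The last step is the partition-of-unity globalization already exploited earlier in the paper. Since each $s_{\mathfrak{m}}\notin\mathfrak{m}$, the family $\{s_{\mathfrak{m}}:\mathfrak{m}\in\text{Max}(R)\}$ is contained in no maximal ideal and therefore generates $R$; choose $\mathfrak{m}_1,\dots,\mathfrak{m}_n$ and $r_1,\dots,r_n\in R$ with $1=\sum_{i=1}^{n}r_i s_{\mathfrak{m}_i}$, and set $\pi=\sum_{i=1}^{n}r_i\pi_{\mathfrak{m}_i}\colon E\to M$. Then $\pi\iota=\sum_{i=1}^{n}r_i s_{\mathfrak{m}_i}1_M=1_M$, so $\iota$ splits, $M$ is a direct summand of the injective module $E$, and hence $M$ is injective. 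The main obstacle is precisely this passage from local to global: the hypotheses deliver only the $u$-$\mathfrak{m}$-splittings $\pi_{\mathfrak{m}}\iota=s_{\mathfrak{m}}1_M$, and one must verify that combining them with the coefficients $r_i$ collapses the scalars $s_{\mathfrak{m}_i}$ into a genuine splitting $\pi\iota=1_M$ rather than a mere $u$-$S$-splitting; the comaximality of the $s_{\mathfrak{m}}$ over $\text{Max}(R)$ is exactly what upgrades the local data to honest injectivity.
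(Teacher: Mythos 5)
Your proposal is correct and follows the same route as the paper: the paper likewise disposes of $(1)\Rightarrow(2)$ and $(2)\Rightarrow(3)$ as immediate, and for $(3)\Rightarrow(1)$ it invokes \cite[Proposition 4.8]{QK}, which is exactly the local--global statement you isolate (a module that is $u$-$\mathfrak{m}$-injective for every $\mathfrak{m}\in\text{Max}(R)$ is injective). The only difference is that you open that black box: your argument --- extracting, for each $\mathfrak{m}$, a map $\pi_{\mathfrak{m}}\colon E(M)\to M$ with $\pi_{\mathfrak{m}}\iota=s_{\mathfrak{m}}1_M$ from the $u$-$(R\setminus\mathfrak{m})$-epimorphism $\operatorname{Hom}_R(E(M),M)\to\operatorname{Hom}_R(M,M)$, and then collapsing the scalars via $1=\sum_i r_i s_{\mathfrak{m}_i}$ to get a genuine splitting of $\iota$ --- is a valid self-contained proof of the cited proposition, and it mirrors precisely the partition-of-unity technique the paper itself uses in the unnumbered proposition of Section 2 on $u$-$\mathfrak{m}$-pseudo-injectivity.
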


\begin{proof}
    $(1)\Rightarrow (2)$ and $(1)\Leftrightarrow (4)$: Follow from Remark \ref{rem2}.\\
     $(2)\Rightarrow (3)$: Clear.\\
    $(3)\Rightarrow (1)$: Let $M$ be a quasi-injective module. Then by (3), $M$ is $u$-$\mathfrak{m}$-injective for every $\mathfrak{m}\in \text{Max}(R)$. Thus, by \cite[Proposition 4.8]{QK}, $M$ is injective. Therefore, $R$ is a $QI$-ring.
\end{proof}

\begin{corollary}
    Let $R$ be a ring. Then the following statements are equivalent:
     \begin{enumerate}
          \item[(1)] $R$ is a $QI$-ring;
          \item[(2)] $R$ is a $u$-$\mathfrak{p}$-$Q$$u$-$\mathfrak{p}$-$I$-ring for every $\mathfrak{p}\in \text{Spec}(R)$;
          \item[(3)] $R$ is a $u$-$\mathfrak{m}$-$Q$$u$-$\mathfrak{m}$-$I$-ring for every $\mathfrak{m}\in \text{Max}(R)$;
           \item[(4)] $R$ is a semisimple ring.
      \end{enumerate}
\end{corollary}
    
\begin{proof}
$(1)\Rightarrow (2)$ and $(1)\Leftrightarrow (4)$: Follow from Remark \ref{rem2}.\\
     $(2)\Rightarrow (3)$: Clear.\\
    $(3)\Rightarrow (1)$: Suppose that $R$ is a $u$-$\mathfrak{m}$-$Q$$u$-$\mathfrak{m}$-$I$-ring for every $\mathfrak{m}\in \text{Max}(R)$, then by Remark \ref{rem2}, we have $R$ is a $Q$$u$-$\mathfrak{m}$-$I$-ring for every $\mathfrak{m}\in \text{Max}(R)$. Thus, by Proposition \ref{plocal}, $R$ is a $QI$-ring.
\end{proof}

The last result of this section gives a characterization of rings in which every $u$-$S$-pseudo-injective module is $u$-$S$-injective.
    
\begin{thm}\label{thm3} 
    Let $R$ be a ring and $S$ a multiplicative subset of $R$. Then the following statements are equivalent:
    \begin{enumerate}
          \item[(1)] Every $u$-$S$-pseudo-injective module is $u$-$S$-injective;
          \item[(2)] Every direct sum of two $u$-$S$-pseudo-injective modules is $u$-$S$-pseudo-injective.
      \end{enumerate}
\end{thm}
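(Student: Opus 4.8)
The plan is to prove the two implications separately, reusing the splitting machinery already built for Theorems \ref{thm2} and \ref{thm1}.

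For $(1)\Rightarrow(2)$ I would take two $u$-$S$-pseudo-injective modules $A$ and $B$. By hypothesis (1) each of them is $u$-$S$-injective, and a finite direct sum of $u$-$S$-injective modules is again $u$-$S$-injective (the same fact invoked in the proof of Theorem \ref{thm1}, via \cite[Proposition 4.7]{QK}), so $A\oplus B$ is $u$-$S$-injective. The implication chain $u$-$S$-injective $\Rightarrow$ $u$-$S$-quasi-injective $\Rightarrow$ $u$-$S$-pseudo-injective from Remark \ref{rem1} (2) then shows $A\oplus B$ is $u$-$S$-pseudo-injective. This direction is routine.

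For $(2)\Rightarrow(1)$ I would take a $u$-$S$-pseudo-injective module $M$ and fix a monomorphism $f:M\to E$ into an injective module $E$ (for instance $E=E(M)$). Since $E$ is injective it is $u$-$S$-injective by \cite[Corollary 4.4]{QK}, hence $u$-$S$-pseudo-injective, so by (2) the module $M\oplus E$ is $u$-$S$-pseudo-injective. The crux is to convert this into a $u$-$S$-splitting of $f$. Mimicking the construction in Theorem \ref{thm2}, I would set $T=0\oplus f(M)\leq M\oplus E$ and define $\theta:T\to M\oplus E$ by $\theta(0,f(m))=(m,0)$; injectivity of $f$ makes $\theta$ a well-defined monomorphism, hence a $u$-$S$-monomorphism. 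Applying the $u$-$S$-pseudo-injectivity of $M\oplus E$ to the submodule $T$ yields $s\in S$ and $g\in\text{End}_R(M\oplus E)$ with $s\theta=g|_T$. Writing $i_2:E\to M\oplus E$ for the natural injection, $p_1:M\oplus E\to M$ for the natural projection, and $\psi=p_1gi_2:E\to M$, a direct computation gives $\psi f(m)=p_1g(0,f(m))=p_1\,s(m,0)=sm$, so $\psi f=s1_M$.

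From here the argument closes exactly as in the proof of Theorem \ref{thm1}: the relation $\psi f=s1_M$ means the short exact sequence $0\to M\xrightarrow{f} E\to E/\text{Im}(f)\to 0$ is $u$-$S$-split, so by \cite[Lemma 2.8]{KMOZ} the module $E$ is $u$-$S$-isomorphic to $M\oplus E/\text{Im}(f)$. Since $E$ is $u$-$S$-injective, \cite[Proposition 4.7 (3)]{QK} forces $M\oplus E/\text{Im}(f)$ to be $u$-$S$-injective, and then $M$ is $u$-$S$-injective by \cite[Corollary 2.8 (2)]{MM}. The main obstacle is this second implication, and specifically the choice of the twisted ``graph-style'' embedding $\theta$ so that $u$-$S$-pseudo-injectivity (which only extends $u$-$S$-monomorphisms) produces a retraction of $f$; once that submodule and map are set up correctly, the remaining steps are the bookkeeping already standardized in the earlier theorems.
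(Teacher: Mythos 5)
Your proof is correct, but the $(2)\Rightarrow(1)$ direction takes a genuinely different route from the paper's. The paper argues indirectly: from (2) it gets that $M\oplus M$ is $u$-$S$-pseudo-injective for any $u$-$S$-pseudo-injective $M$, invokes Corollary \ref{cor1} to conclude that every $u$-$S$-pseudo-injective module is $u$-$S$-quasi-injective, then uses (2) again to show the class of $u$-$S$-quasi-injective modules is closed under finite direct sums, and finally applies the characterization of $u$-$S$-$Q$$u$-$S$-$I$-rings (Theorem \ref{thrm1}) to descend to $u$-$S$-injectivity. You instead argue directly at the pseudo-injective level: apply (2) to $M\oplus E$ with $E=E(M)$, and use the twisted graph embedding $\theta(0,f(m))=(m,0)$ on $T=0\oplus f(M)$ to manufacture a map $\psi$ with $\psi f=s1_M$, after which the $u$-$S$-splitting bookkeeping (KMOZ Lemma 2.8, QK Proposition 4.7(3), MM Corollary 2.8(2)) closes the argument exactly as in Theorem \ref{thm1}. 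Both are valid; your version has shorter dependency depth (it bypasses Corollary \ref{cor1} and Theorem \ref{thrm1} entirely), while the paper's version additionally records the intermediate fact that under (2) the notions of $u$-$S$-pseudo-injective and $u$-$S$-quasi-injective coincide, which is of independent interest. One economy you missed: your retraction construction is literally the first conclusion of Theorem \ref{thm2} applied with $A=M$, $B=E$, $\varphi=f$ (a monomorphism is in particular a $u$-$S$-monomorphism, and well-definedness of $\theta$ is automatic there), so you could cite that theorem to obtain $\psi f=s1_M$ in one line rather than reproving it.
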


\begin{proof}
$(1)\Rightarrow (2)$: Let $M$ and $N$ be two $u$-$S$-pseudo-injective modules. Then by (1), $M$ and $N$ are $u$-$S$-injective and hence $M\oplus N$ is $u$-$S$-injective. Thus, by Remark \ref{rem1} (2), $M\oplus N$ is $u$-$S$-pseudo-injective. \\
   $(2)\Rightarrow (1)$: Let $M$ be a $u$-$S$-pseudo-injective module. Then by (2), $M\oplus M$ is $u$-$S$-pseudo-injective. So by Corollary \ref{cor1}, $M$ is $u$-$S$-quasi-injective. Hence, every $u$-$S$-pseudo-injective module is $u$-$S$-quasi-injective ...(*). If $M$ and $N$ are two $u$-$S$-quasi-injective modules, they are $u$-$S$-pseudo-injective by Remark \ref{rem1} (2), so by (2), $M\oplus N$ is $u$-$S$-pseudo-injective and hence by (*), $M\oplus N$ is $u$-$S$-quasi-injective. Thus, every direct sum of two $u$-$S$-quasi-injective modules is $u$-$S$-quasi-injective. Therefore, by (*) and Theorem \ref{thrm1}, $(1)$ holds. 
   \end{proof}

We end this paper by listing the following unanswered questions:

\begin{question}
    Let $R$ be a commutative ring and $M$ a pseudo-injective $R$-module that is not quasi-injective. Is it true that $M$ is $u$-$\mathfrak{m}$-pseudo-injective for every $\mathfrak{m}\in \text{Max}(R)$?
\end{question}

\begin{question}
      Let $R$ be a commutative ring and $S$ a multiplicative subset of $R$. Is it true that $R$ is $u$-$S$-semisimple if and only if $R$ is $u$-$S$-$Q$$u$-$S$-$I$ if and only if $R$ is $Q$$u$-$S$-$I$?
\end{question}

\textbf{Conflict of interest:} The authors declare that they have no conflict of interest.\\

\textbf{Acknowledgments:} The authors would like to thank the reviewers and editor for their valuable comments that improved the quality of the paper.

\end{document}